\NeedsTeXFormat{LaTeX2e}
\documentclass[BCOR=12mm, DIV=default,twoside, pagesize]{scrartcl} 
%\documentclass[twocolumn]{svjour3}          % twocolumn
%
%\smartqed  % flush right qed marks, e.g. at end of proof
%
\usepackage{graphicx}
\usepackage{enumerate}
\usepackage{amsmath,amsthm,amssymb,mathabx,color,makeidx,pstricks,pst-node,pst-tree,graphicx,ifthen}
\usepackage{url}
%\usepackage[latin1]{inputenc}
%
%ntheorem ??? statt amsthm
%
% Paket fuer Stichwortverzeichnis einbinden
% \index[]{} erstellt Einträge im Stichwortverzeichnis
% \printindex gibt das Stichwortverzeichnis aus

%\renewcommand{\familydefault}{\sfdefault}
%\usepackage{helvet}

%\usepackage{makeidx}
\usepackage[plainpages=false]{hyperref} 
\usepackage{pdfpages}
\usepackage[all]{xy}
%\usepackage[automark]{scrpage2}
%\pagestyle{scrheadings}

%Die folgenden Befehle machen alles grau.
%\definecolor{darkblue}{gray}{.8}
%\color{darkblue}
\definecolor{darkblue}{rgb}{0.3,0,1}
\definecolor{darkblue}{rgb}{0,0,0}
\usepackage{natbib}

\usepackage{tikz,pgfplots}
\usetikzlibrary{arrows,automata}

% multiple editors
%\usepackage{changes}
%%\usepackage[final]{changes} % Comments are ignored
%\definechangesauthor{Soren}{red}
%\definechangesauthor{Paavo}{green}
%\definechangesauthor{Bao}{orange}

\newtheorem{proposition}{Proposition}[section] %denna med i latex

\newtheorem{thm}[proposition]{Theorem}
\newtheorem{lemma}[proposition]{Lemma}
\newtheorem{corollary}[proposition]{Corollary}
\newtheorem{remark}[proposition]{Remark}
\newtheorem{example}[proposition]{Example}

\usepackage[T1]{fontenc}
\usepackage{lmodern}

\def \N{\mathbb{N}}
\def \R{\mathbb{R}}

\def \NN{\mathbb{N}_0}

\def \pipe {\;\vert\;}
%\def \foralll {\;\; \forall \,}

%\renewcommand{\emptyset}{\ensuretext{\O}}

% disjunkte Vereinigung

% Quantoren

% Bäume, die einfach besser aussehen.

%\newcommand{\MyNodeAColor}[3]{\Tr[ref=l]{\rnode[r]{#2}{\psframebox[linecolor=#1]{\shortstack{#3}}}}}

%\newcommand{\pstreeAColor}[4]{\pstree{\MyNodeAColor{#1}{#2}{#3}}{\renewcommand{\pspred}{#2} #4}}

%Erster Parameter unterhalb zweitem, dritter oberhalb zweitem "untemitteoben"
\newcommand{\umo}[3]{\underset{\text{\tiny{#1}}}{\stackrel{\mbox{\tiny{#3}}}{\vphantom{\geq}#2\vphantom{\geq}}}}
% "obenmitteunten"
\newcommand{\omu}[3]{\umo{#3}{#2}{#1}}

%Funktion definieren
\newcommand{\function}[3]{ #1:#2 \rightarrow #3}
%Funktion mit Zuordnungsvorschrift definieren
\newcommand{\functionn}[4]{ #1:#2 \rightarrow #3, \; #4 \mapsto \,}
%letzte Fußnote referenzieren

%Rot

%Abstand (horizontal) in mm

%innerhalb dieses Dokuments referenzieren
%Bezugspunkt mit \label setzen
%Refernez mit \ref!

%Komplemente

%Integral von minus unendlich bis plus unendlich
%\newcommand{\Befehl}{\intl_{-\infty}^{\infty}}
%Laue-Haken:
\newcommand{\haken}[1]{\underline{#1}{\raise -0.3ex\hbox{\vphantom{$#1$}\vrule}}}

%Menge
%\newcommand{\set}[1]{\relpenalty 10000000 \binoppenalty 10000000\{#1\} \relpenalty 500 \binoppenalty 700}
\newcommand{\set}[1]{\left\{#1\right\}}
\newcommand{\sett}[2]{\left\{#1\;;\;#2\right\}}

\newcommand{\Wkt}[2]{#1\left(#2\right)}
\newcommand{\bedWkt}[3]{\Wkt{#1}{#2\pipe#3}}
\parindent=0pt

\hyphenation{Seite}

\renewcommand\inf{\qopname\relax m{\vphantom{p}inf}}

\newcommand{\betrag}[1]{\;\left\vert#1\right\vert\;}
\newcommand{\betrags}[1]{\left\vert#1\right\vert}

\newcommand{\Lim}{\lim\limits}

\newcommand{\off}[1]{\iffalse#1\fi}

%%%%%%%%%
\newcommand{\Tau}{\mathcal{T}}

\newcommand{\MCStoppzeitenMenge}[1][]{\Tau\ifthenelse{\equal{}{#1}}{}{\Klammern{#1}}}
\newcommand{\MCStoppzeitenMengeDef}[1]{\sett{\tau\in\MCStoppzeitenMenge}{Z_\tau \in #1 \mbox{ on } \set{\tau<\infty}}}

\newcommand{\CondExp}[2]{\CondXY{#1}{#2}{E}}

\newcommand{\CondXY}[3]{\text{#3}\left(#1\middle|#2\right)}

\newcommand{\id}{1\hspace{-0,9ex}1}

\newcommand{\Eqref}[1]{\textbf{\eqref{#1}}}
\newcommand{\xyref}[3]{\hyperref[#1]{#2\ref*{#1}#3(page~\pageref{#1})}}

\newcommand{\TextRef}[2]{\hyperref[#1:#2]{#1~\ref*{#1:#2} (page~\pageref{#1:#2})}}
\newcommand{\TextRefD}[2]{\hyperref[#1:#2]{#1~\ref*{#1:#2} (page~\pageref{#1:#2})}}

\newcommand{\wnl}[3]{\ifthenelse{\equal{}{#2}}{}{#1#2#3}}
\newcommand{\fabist}[5]{%
	\ifmmode\text{ \wnl{}{#5}{ }}\else\wnl{}{#5}{ }\fi%
	\ifthenelse%Der Fall 0<=n<\infty
	{\equal{#1}{0}\and\equal{#4}{\infty}\and{\equal{#3}{<}}}%                 
	{\ifmmode #2\in\NN\else$#2\in\NN$\fi}%
	{\ifthenelse%Der Fall 0<=n<=\infty
		{\equal{#1}{0}\and\equal{#4}{\infty}\and{\equal{#3}{\leq}}}%    
		{\ifmmode #2\in\NN\cup\set{\infty}\else $#2\in\NN\cup\set{\infty}$\fi}%
		{\ifthenelse%Der Fall 1<=n<\infty
			{\equal{#1}{1}\and\equal{#4}{\infty}\and{\equal{#3}{<}}}%
			{\ifmmode #2\in\N\else $#2\in\N$\fi}%
			{\ifthenelse% Der Fall n<=\infty
				{\equal{#4}{\infty}\and{\equal{#3}{\leq}}}%
				{\ifthenelse% Der Fall 1<=n<=\infty
					{\equal{#1}{1}}%
					{\ifmmode #2\in\N\cup\set{\infty} \else $#2\in\N\cup\set{\infty}$\fi}%
					{\ifmmode%% Der Fall n<=\infty
						#2\in\NN\cup\set{\infty}\text{ with }#1\leq#2%
						\else%
						$#2\in\NN\cup\set{\infty}$ with $#1\leq#2$%
						\fi}%
				}
				{\ifthenelse% Der Fall 0<=n
					{\equal{#1}{0}}%
					{\ifmmode #2\in\NN\text{ with }#2#3#4\else$#2\in\NN$ with $#2#3#4$\fi}%
					{\ifthenelse% Der Fall 1<=n
						{\equal{#1}{1}}%
						{\ifmmode #2\in\N\text{ with }#2#3#4\else $#2\in\N$ with $#2#3#4$\fi}%
						{\ifmmode%
							#2\in\NN\text{ with }#1\leq#2%
							\ifthenelse{\equal{#4}{\infty}\and{\equal{#3}{<}}}{}{#3#4}%
							\else%
							$#2\in\NN$ with $#1\leq#2%
							\ifthenelse{\equal{#4}{\infty}\and{\equal{#3}{<}}}{}{#3#4}$%
							\fi%
						}%
					}%
				}%
			}%
		}%
	}%
}

\newcommand{\fabisss}[5]{#5#2
	\ifthenelse
	{\equal{#1}{0}\and\equal{#4}{\infty}\and{\equal{#3}{<}}}
	{\in\NN}
	{\ifthenelse
		{\equal{#4}{\infty}\and{\equal{#3}{\leq}}}
		{\text{ }\penalty 300\text{with }{#1\leq#2#3#4}}
		{\ifthenelse
			{\equal{#1}{0}}
			{\in\NN\text{ }\penalty 300\text{with }{#2#3#4}}
			{\in\NN\text{ }\penalty 300\text{with }{#1\leq#2#3#4}}
		}
	}
}

% Potenzmenge

\newcommand{\Klammern}[1]{\left(#1\right)}

\newcommand{\XX}{X}

\newcommand{\tttt}[4]{#1_{\vphantom{N}#2\ifthenelse{\equal{#4}{}}{}{,#4}}^{\ifthenelse{\equal{#3}{}}{}{\ifthenelse{\equal{#3}{*}}{#3}{(#3)}}}}
\newcommand{\ttt}[4]{#1_{#4}^{\ifthenelse{\equal{#3}{}}{}{\ifthenelse{\equal{#3}{*}}{\vphantom{(#3)}#3}{(#3)}}}}

%\ttt{\tau}{N}{(m)}{n}

%Mit dem Befehl privat kann man Teile vorübergehend ausblenden. 
\newcommand{\privat}[1]{\iffalse#1\fi}

\newcommand{\integrala}[2]{\int\limits_{#1}#2\text{d}P_z}%

\begin{document}
\title{Flexible forward improvement iteration for infinite time horizon Markovian optimal stopping problems}
%\title{On explicit solutions of optimal stopping problems for strong Markov processes}
\author{S\"oren Christensen\thanks{Christian-Albrechts-Universit\"at, Mathematisches Seminar, Heinrich-Hecht-Platz 6, 24118 Kiel, Germany, E-mail: {christensen}@math.uni-kiel.de}, 
Albrecht Irle\thanks{Deceased on July 21, 2021}, and Julian Peter Lemburg}
\date{\today}
\maketitle

\begin{abstract}
In this paper, we propose an extension of the {forward improvement iteration} algorithm, originally introduced in \cite{Irl06} and recently reconsidered in \cite{miclo_villeneuve_2021}. The main new ingredient is a flexible window parameter describing the look-ahead distance in the improvement step. We consider the framework of a Markovian optimal stopping problem in discrete time with random discounting and infinite time horizon. We prove convergence and show that the additional flexibility may significantly reduce the runtime. 
\end{abstract}

\noindent \textbf{Keywords:} Discrete time Markov chain, Optimal stopping, forward improvement iteration, $k$-step look ahead, infinite time horizon
\vspace{1mm}

\noindent \textbf{AMS MSC2010:}  
60G40; 60J05; 91B28; 62L15.

\section{Introduction}
We consider a Markovian optimal stopping problem with random discounting of the form 
\[\sup_\tau\CondExp{\Klammern{\prod_{i=0}^{\tau-1}\alpha(Z_i)}g(Z_\tau)}{Z_0=z}.\]
As is well-known, an optimal stopping rule is (if it exists) given as a first entrance time of the underlying Markov chain $Z$ into the stopping set, see \cite{MR2374974}. The forward {forward improvement iteration} algorithm, see \cite{Irl06}, provides an upper approximation of this stopping set. More precisely, it consists of computing a decreasing sequence $B^0\supseteq B^1\supseteq B^2\supseteq \dots $ of subsets of the state space in the following way: $B^{n+1}$ is constructed from $B^n$ by taking out those states $z$ where using the first entrance time into $B^n$, that is $\inf\{n\geq 1:Z_n\in B^n\}$, gives more expected reward than stopping immediately. In this sense, 1-step look ahead rules are used in each iteration step to identify points which are not in the stopping set. The main finding reported in the literature are the convergence of this algorithm to the optimal value. For finite states spaces, the algorithm terminates in a finite number of steps. In the numerical examples it turns out that this number is often surprisingly low.

The aim of this paper is to study the following modification of the forward improvement iteration algorithm. Instead of considering 1-step look ahead rules, $\kappa$-steps look ahead rules are used in each iteration step, where $\kappa$ denotes a window parameter $\geq 1$ which may vary over time. This makes the analysis considerably more involved. The purpose of this paper is twofold. First, we prove convergence of this algorithm. Second, we will show in an example that using a flexible window parameter notably speeds up the algorithm.

\subsection{Some related literature}
Pricing of American and Bermudan style derivatives on a high dimensional system of underlyings is considered an enduring problem for the last decades and many different methods have been proposed. Prices for such high dimensional options are difficult, if not impossible, to compute by standard partial differential equation methods. Therefore, various Monte Carlo algorithms for pricing Bermudan options have been developed and described in the literature. For a detailed and general overview we refer to \cite{Gla03}.

However, a major part of these methods is based on a backward structure in time, which requires a finite time horizon. For infinite horizon problems, the number of suitable algorithms is more limited. Here, we just mention methods being based on stochastic approximation, see \cite{tsitsiklis1999optimal}, and linear programming, see \cite{cho2002linear,christensen2014method}.

Another stream of literature, which we follow here, is based in the classical method of policy iteration as proposed by Howard in \cite{howard:dynamic1960}.
%\footnote{See the explanations in \cite[Introduction]{BKS08}, \cite[Part 1]{KS06} and \cite[page 179]{Irle1980}.}
%Quelle: \cite{BKS08} Absatz 4
%Quelle: \cite{BKS06} part 1
%Quelle: \cite{Irl80} page 179
%--->
%It should be noted that the method proposed in \cite{KS06} is a kind of \myquote{policy iteration} which is rather different from policy iteration as in Howard \cite{howard:dynamic1960} and \cite{Put94}.
%Quelle: \cite{KS06} Absatz 11
Adapting this idea to problems of optimal stopping yields a method first used in \cite{Irle1980} for tackling best choice problems with a random population size,
and later called {forward improvement iteration}, see \cite{Irl06,Irl09} for a detailed analysis. The method has recently been reconsidered in \cite{miclo_villeneuve_2021} and, e.g., applied to restriced problems. See also \cite{presman2011new} for a related approach. 

Let us mention some further related literature. Bender, Kolodko and Schoenmarkers have developed a similar approach in \cite{KS06} and \cite{BS06} with some interesting new features, including a window parameter, for problems with a finite set of exercise dates. Their analysis is based on backward induction. In a continuous time framework, a related approach for one-dimensional diffusions is presented in \cite{sonin1999elimination}. Let us also mention that a related form of forward algorithm to find equilibrium stopping times in time-inconsistent problems is  studied in \cite[Section 3]{christensen2018finding}. Further references are given below.

%In this paper, our aim is taking over the idea of using a window parameter to general infinite time-horizon problems and to prove convergence for quite general (not necessarily finite) state spaces. 

\subsection{Structure of the paper}
We will introduce the framework and basic notions in Section \ref{sec:main_result} and then state the main result (Theorem \ref{thm:MainMarkov}). As a key ingredient for the proof of this result, we first analyze the one-step improvement of stopping rules in Section \ref{sec:one-step}, where two lemmata are moved to an appendix to enhance readability. These results are then used to prove Theorem \ref{thm:MainMarkov} in\ Section \ref{sec:proof}. For the implementation it is interesting to note that for finite state spaces the procedure can be described in terms of linear equations as detailed in Section \ref{sec:finite_state}. This is then used in Section \ref{sec:numerics} where an example is given.

\section{Setting and  main result}\label{sec:main_result}
Let $(Z_n)_{n\in\NN}$ be a time-homogeneous Markov chain on a state space $S$ on some probability space with filtration $\mathcal{A}_n,\,n\in\N_0$. To avoid technicalities, we assume $S$ to be countable (but not necessarily finite). Let furthermore
$\function{g}{S}{\R}$ and
$\function{\alpha}{S}{[0,1]}$. We write $\mathcal T$ for the set of all stopping rules and look at the problem of optimal stopping for the pay-off
\[
\XX_n:=\Klammern{\prod_{i=0}^{n-1}\alpha(Z_i)}g(Z_n)
\fabist{0}{n}{<}{\infty}{for all},\;\;
\XX_{\infty}:=\limsup_{n}{\XX_n}
.\]
We write for simpler notation formally
\[
\Klammern{\prod_{i=0}^{\infty}\alpha(Z_i)}g(Z_\infty)=X_\infty
\]
so that we have
\[
X_\tau = \Klammern{\prod_{i=0}^{\tau-1}\alpha(Z_i)}g(Z_\tau)\mbox{ for all stopping rules }\tau.
\]
Assume as usual
\begin{align*}
\CondExp{\XX_\tau}{Z_0=z} \text{ exists for all }\tau\in\MCStoppzeitenMenge\text{ and }z\in S
%\label{eq:58days2042}
.\end{align*}
Furthermore, we assume that the integrability condition
\begin{align*}
\CondExp{X_{\lim\limits_{n\to\infty}\sigma_n}}{Z_0=z}
=
%\CondExp{X_{\lim\limits_{n\to\infty}\sigma_n}}{Z_0=z}	
%\omu{!\;}{=}{}
\lim\limits_{n\to\infty}\CondExp{X_{\sigma_n}}{Z_0=z}\nonumber
%\tag{\ref*{eq:Lemma:MainMarkov:2}}
%\label{eq:Theorem:MainMarkov:2}
\end{align*}
holds true for all $z\in S$ and all non-decreasing sequences $(\sigma_n)_{n\in\NN}$ of stopping rules. It is easily seen that the previous assumptions are fulfilled under the standard conditions
\begin{align*}
\text{$\CondExp{\sup_{n\in\N}\betrags{X_n}}{Z_0=z}<\infty$ and $X_{\infty}=\lim_{n\to\infty}X_n$}.
\end{align*}

In this framework, we consider the problem of optimal stopping for $X$, i.e., the problem of maximizing, for all\ $z\in S$, the expectation
\[\CondExp{\XX_\tau}{Z_0=z}\]
over all stopping rules $\tau\in \mathcal T$. Here, we even consider a more general constraint problem of stopping, namely the maximization over $\tau$ in 
\[
\MCStoppzeitenMenge[B]:=\MCStoppzeitenMengeDef{B}
\]
for some given set $B\subseteq S$. We remark that this class of problems naturally arises in problems of optimal  control that are based on stopping, see -- in a continuous time framework -- e.g. \cite{christensen2020solution}.

\begin{remark}
	In the recent article  \cite{miclo_villeneuve_2021}, a (time-homogeneous) Markov chain in continuous time $(\widehat{Z}_t)_{t\in[0,\infty[}$ %Google-Ergebnisse: 3760, 15+12
	%(other names are {discrete Markov process in continuous time}  % 8+9
	%and {discrete continuous-time Markov process}) % 3+5
	operating on the discrete state space $S$ is considered.
%	and a payoff 
%	\[\text{$\widehat{X}_t:=\widehat{\alpha}^tg(\widehat{Z}_t)$ for each $t\in[0,\infty[$ and 
%		$\widehat{X}_{\infty}:=\limsup{t}\widehat{X}_t$.}\]
	Assuming no instantaneous jumps (i.\,e.\  between two jump times there is always a positive amount of time) and no absorption, the above Markov chain has a pure jump structure, it is a special Markov jump process and we can describe it up to explosion time completely by a sequence of jump times and a sequence of states visited with adequate parameters, see \cite[p. 39-40]{Asm03}. When focusing on the problem of optimal stopping of the above continuous time Markov chain we can reduce this problem to one studied in the present paper.
\end{remark}

For all $B\subseteq S$ and $\sigma\in\Tau$ define
\begin{align*}
%\label{eq}{1.6.1.A}
\tau_{\sigma}(B)
:=
\inf\sett{k\geq \sigma}{Z_k\in B}
,\end{align*}
the time of first entrance in $B$ at or after time $\sigma$.
Furthermore, write for all $B\subseteq S$ and $k\in\N$
\begin{eqnarray*}
	B^{*k}
	:=
	\sett{z\in B}{g(z)\geq{\CondExp{\XX_{\tau_{l}(B)}}{Z_0=z}}\mbox{ for all }l=1,...,k} %entfernt nach D: \cup\set{0} am 04.06.2010
	\end{eqnarray*}
for the set of all states such that looking $l=1,...,k$ steps ahead and then waiting until reaching $B$ does not yield more expected reward than immediate stopping. Note that for $B=S$, the first entrance time into $S^{*k}$ is called $k$-step look ahead rule and had been studied extensively in the early days of optimal stopping, see \cite[Chapter 5]{Ferguson} for an overview with many examples. The case $k=1$ corresponds to the myopic rule, which is optimal for problems of monotone type under natural assumptions, see \cite{CRS1971} for an early treatment and \cite{CI19} for an overview on more recent developments. 

For each iteration in the algorithm, we now want to choose some window parameter $k$. $k$ may be chosen differently in each step, so we  therefore fix a window parameter function
$$
\function{\kappa}{\N}{\N}%\cup\set{\infty}}.
$$ 

To make the procedure precise, for a given $B^0\subseteq S$, we define recursively
\begin{eqnarray*}%%\labell{eq}{1.6.3.E}
B^k:=(B^{k-1})^{*\kappa(k)}\fabist{1}{k}{<}{\infty}{for all},
\quad
F := \bigcap_{k\in\NN}B^k
.\end{eqnarray*}

%%\labell{Theorem}

%=====================================================
Our main result is the following:

\begin{thm}\label{thm:MainMarkov}
Let 
%$\function{\kappa}{\N}{\N\cup\set{\infty}}$ and
 $B^0\subseteq S$.
%
%Assume (as above)
%\begin{gather}
%\CondExp{X_{\lim\limits_{n\to\infty}\sigma_n}}{Z_0=z}
%=
%%\CondExp{X_{\lim\limits_{n\to\infty}\sigma_n}}{Z_0=z}	
%%\omu{!\;}{=}{}
%\lim\limits_{n\to\infty}\CondExp{X_{\sigma_n}}{Z_0=z}\nonumber
%\\	\text{for all }z\in S\text{ and all non-decreasing sequences }(\sigma_n)_{n\in\NN}\text{ of stopping rules}
%\tag{\ref*{eq:Lemma:MainMarkov:2}}
%%\labell{eq}{Theorem:MainMarkov:2}
%.\end{gather}
%and assume
%\begin{align}
%	\CondExp{X_{\tau}}{Z_0=z}
%	\text{ exists for all }\tau\in\MCStoppzeitenMenge[B^0]
%	\text{ and }z\in S
%	%\labell{eq}{Theorem:MainMarkov:1}
%.\end{align}
Then $\tau_0(F)$ is optimal in $\MCStoppzeitenMenge[B^0]$ and moreover
\begin{gather}
\tau_0(B^{k})
\leq
\tau_0(B^{k+1})
\fabist{0}{k}{<}{\infty}{for all},\label{1.6.3.A}
\\	
\lim\limits_{k\to\infty}\tau_0(B^{k})=\tau_{0}(F),
\label{1.6.3.B}
\\	
\CondExp{X_{\tau_0(B^{k})}}{Z_0=z}
\leq
\CondExp{X_{\tau_0(B^{k+1})}}{Z_0=z}
\fabist{0}{k}{<}{\infty}{for all}
\text{ for all }z\in S.
\label{1.6.3.C}
%\\
%%	\lim\limits_{k\to\infty}\CondExp{X_{\tau_{0}(B^k)}}{Z_0=z}
%%=
%\CondExp{X_{\tau_{0}(F)}}{Z_0=z}
%=
%\sup\limits_{\substack{\tau\in\MCStoppzeitenMenge[B^0]}}\;\CondExp{X_{\tau}}{Z_0=z}
%\text{ for all }z\in S
%%\labell{eq}{1.6.3.D}
\end{gather}
\end{thm}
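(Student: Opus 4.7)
The plan is to peel off the four claims one at a time, using the one-step improvement results of Section~\ref{sec:one-step} as the main technical input. The inclusion $B^{k+1}=(B^k)^{*\kappa(k+1)}\subseteq B^k$ is immediate from the definition of $(\cdot)^{*\kappa}$, and since first-entrance times are order-reversing in the target set, \eqref{1.6.3.A} follows. For \eqref{1.6.3.B} write $T:=\lim_k\tau_0(B^k)$. Since $F\subseteq B^k$ for all $k$ we have $T\leq\tau_0(F)$. Conversely, $(\tau_0(B^k))_k$ is $\N_0$-valued and non-decreasing, hence eventually constant on $\{T<\infty\}$; there $Z_T\in B^k$ for all $k$ large enough, so $Z_T\in F$ and $T\geq\tau_0(F)$. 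On $\{T=\infty\}$ the inequality is trivial.

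The value improvement \eqref{1.6.3.C} is the payoff version of the one-step improvement lemma of Section~\ref{sec:one-step}: if $B'=B^{*m}$ for some $B\subseteq S$ and $m\geq 1$, then $\CondExp{X_{\tau_0(B)}}{Z_0=z}\leq\CondExp{X_{\tau_0(B')}}{Z_0=z}$. Specialising to $B=B^k$, $B'=B^{k+1}$, $m=\kappa(k+1)$ gives \eqref{1.6.3.C}. The underlying argument combines the strong Markov property with the defining inequality of $(\cdot)^{*m}$: at every state in $B\setminus B'$ some look-ahead $\tau_l(B)$ with $l\leq m$ beats immediate stopping, and iterating this dominance through the trajectory yields the claim.

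For the optimality of $\tau_0(F)$, the integrability assumption together with \eqref{1.6.3.B} first gives
\[
\CondExp{X_{\tau_0(F)}}{Z_0=z}=\lim_{k\to\infty}\CondExp{X_{\tau_0(B^k)}}{Z_0=z},
\]
so by \eqref{1.6.3.C} the rule $\tau_0(F)$ dominates every $\tau_0(B^k)$ in mean. To identify this limit with the supremum over $\mathcal{T}(B^0)$, one exploits that $F$ inherits the fixed-point equation $F=F^{*m}$ for every $m\geq 1$; concretely, letting $k\to\infty$ in the defining inequality for $z\in B^{k+1}$ and using \eqref{1.6.3.B} together with the integrability assumption, one obtains, for all $z\in F$ and $l\geq 1$,
\[
g(z)\geq\CondExp{X_{\tau_l(F)}}{Z_0=z}.
\]
Combined with $F\subseteq B^0$, this ``absorbing'' property of $F$, fed into a standard strong-Markov verification argument, rules out that any $\tau\in\mathcal{T}(B^0)$ beats $\tau_0(F)$.

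I expect this last step to be the main obstacle. The delicate part is to translate the pointwise fixed-point property of $F$ into global optimality in $\mathcal{T}(B^0)$: because $\kappa$ is allowed to vary with $k$, one must first verify that $F$ inherits the look-ahead inequality for \emph{every} $l\geq 1$ (rather than only for values actually taken by $\kappa$), and this requires a careful combination of the set-monotonicity of Step~1 with the stopping-time convergence of Step~2 before a Bellman-type verification can close the gap. The other three claims are either purely structural or direct corollaries of the one-step improvement lemma.
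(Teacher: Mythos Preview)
Your treatment of \eqref{1.6.3.A}--\eqref{1.6.3.C} is essentially the paper's: monotonicity of the sets gives monotonicity of the entrance times, the eventual-constancy argument on $\{T<\infty\}$ identifies the limit with $\tau_0(F)$, and \eqref{1.6.3.C} is exactly statement \eqref{rhoforderung5} of Proposition~\ref{MainMarkov} with $\sigma=\tau_0(B^k)$ and $D=\{1,\dots,\kappa(k+1)\}$.

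For the optimality of $\tau_0(F)$ you take a genuinely different route. The paper does \emph{not} establish a fixed-point property of $F$ and then verify; instead it argues in two constructive steps. Step~(1): given any $\rho\in\mathcal{T}(B^0)$, set $\rho^k:=\inf\{n\geq\rho:Z_n\in B^k\}$ and apply Proposition~\ref{MainMarkov} along the chain $B^0\supseteq B^1\supseteq\dots$ to obtain a non-decreasing sequence of improvements converging to some $\rho^\infty\in\mathcal{T}(F)$ with $\CondExp{X_\rho}{Z_0=z}\leq\CondExp{X_{\rho^\infty}}{Z_0=z}$. Step~(2): for any $\rho\leq\tau$ both in $\mathcal{T}(F)$, show $\CondExp{X_\tau}{Z_0=z}\leq\CondExp{X_\rho}{Z_0=z}$ by invoking, at each visit to $F$, the inequality coming from $F\subseteq(B^k)^{*\kappa(k)}$ and then letting $k\to\infty$. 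Combining the two, any $\sigma\in\mathcal{T}(B^0)$ is first pushed into $\mathcal{T}(F)$ and then dominated by $\tau_0(F)$. Your verification approach can also be made to work and is closer to the classical Snell-envelope picture; it buys a cleaner conceptual statement (the improvement map has $F$ as a fixed point) at the cost of a supermartingale/optional-sampling step that must be reconciled with the paper's specific monotone-limit integrability hypothesis rather than a blanket uniform-integrability assumption.

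Two sharpenings to your sketch. First, the obstacle you flag is a red herring: you never need $F=F^{*m}$ for all $m$, only $m=1$, and this always holds because $\kappa(k)\geq 1$ for every $k$; just pass to the limit in $g(z)\geq\CondExp{X_{\tau_1(B^k)}}{Z_0=z}$ using the analogue of \eqref{1.6.3.B} for $\tau_1$. The case $m=1$ already makes $w(z):=\CondExp{X_{\tau_0(F)}}{Z_0=z}$ superharmonic. Second, the ingredient you do not name explicitly, but which is the other half of any verification argument, is the majorant property $w\geq g$ on $B^0$: for $z\in B^0\setminus F$ pick $k$ with $z\in B^k$, so that $g(z)=\CondExp{X_{\tau_0(B^k)}}{Z_0=z}\leq w(z)$ by \eqref{1.6.3.C} together with the limit relation you already recorded. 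With both pieces in hand, a supermartingale comparison (iterated and passed to the limit via the paper's standing assumption, exactly as the paper does in its Step~(2)) closes the argument.
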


We will give the proof in Section \ref{sec:proof} below.

\section{One-step improvement}\label{sec:one-step}
Before proving the main result, we first study a one-step policy improvement of a given stopping rule inspired by the algorithm described before.

While running numerical examples (see Section \ref{sec:numerics} below) it appears that 
using more general $D\subseteq\N$ (usually with $1\in D$)
for the improvement step 
instead of an initial sequence of $\N$
can be of interest.
Hence let us 
for all $B\subseteq S$ and $D\subseteq\N$ generalize the notion from above by introducing the notation
\begin{eqnarray*}
	B^{*D}
	:=
	\sett{z\in B}{g(z)\geq{\CondExp{\XX_{\tau_{k}(B)}}{Z_0=z}}\mbox{ for all }k\in\ D} %entfernt nach D: \cup\set{0} am 04.06.2010
	.\end{eqnarray*}

\newcommand{\sigmastern}{\tau_\sigma\left(B^{*D}\right)}
\renewcommand{\sigmastern}{\tau_\sigma(B^{*D})}

For all $B\subseteq S$, $D\subseteq\N$, $\sigma\in\MCStoppzeitenMenge[B]$ and
$
\rho\in\MCStoppzeitenMenge[B]
\text{ with }
\sigma\leq\rho\leq\sigmastern
$
define the \emph{improved} stopping rule $\widehat\rho$ via
\begin{gather}
\widehat\rho
:=
\begin{cases}
\rho,&\mbox{ on }\{\rho=\sigmastern\},\\
\tau_{n+j}(B)\wedge\sigmastern,&\mbox{ on }\{\rho=n,\inf\sett{i\in D}{Z_n\not\in B^{*D_{\leq i}}}=j\},
\end{cases}
%\rho\id_{\set{\rho=\sigmastern}}
%+\sum_{n=0}^{\infty}\sum_{j\in D}
%\id_{\set{\rho=n}\cap\set{j=\inf\sett{i\in D}{Z_n\not\in B^{*D_{\leq i}}}}}
%\Klammern{\tau_{n+j}(B)\wedge\sigmastern}
\label{rhodefinition}
\end{gather}
where we use the notation $D_{\leq i}=\{l\in\ D:l\leq j\}$.
We now show that $\widehat{\rho}$ is indeed an improvement of $\rho$ in the following sense:
	%\labell{Lemma}{my158}
	%=====================================================
	\begin{proposition}\label{MainMarkov}
	Let $B\subseteq S$, $D\subseteq\N$, $\sigma\in\MCStoppzeitenMenge[B]$ and
	\begin{eqnarray*}%\labell{eq}{1.6.2.Y}
	\rho\in\MCStoppzeitenMenge[B]
	\text{ with }
	\sigma\leq\rho\leq\sigmastern
	.\end{eqnarray*}
	\begin{enumerate}
		\item 
	It holds that
	\begin{gather}
	\widehat\rho\in\MCStoppzeitenMenge[B]
	\text{ with }\sigma\leq\widehat\rho\leq\sigmastern
	\label{rhoforderung1}
	,\\	\rho\leq\widehat\rho
	\label{rhoforderung2}
	,\\	\rho+1\leq\widehat\rho
	\text{ on }\set{\rho<\sigmastern}
	\label{rhoforderung3}
	.\end{gather}%
\item
If $D=\{l\in \N:l\leq k\}$ for some $k\in\N\cup\set\infty$, then 
%\\Then we have
\begin{gather}
\CondExp{X_{\rho}}{Z_0=z}
\leq
\CondExp{X_{\widehat\rho}}{Z_0=z}
\text{ for all }z\in S\label{rhoforderung4}
,\end{gather}
%and if additionally
%\begin{gather}
%\CondExp{X_{\lim\limits_{n\to\infty}\sigma_n}}{Z_0=z}
%=\nonumber
%\lim\limits_{n\to\infty}\CondExp{X_{\sigma_n}}{Z_0=z}
%\\	\text{for all }z\in S\text{ and all non-decreasing sequences }(\sigma_n)_{n\in\NN}\text{ of stopping rules}
%%\labell{eq}{Lemma:MainMarkov:2}
%,\end{gather}
%then we have 
and
\begin{eqnarray}
\CondExp{X_{\sigma}}{Z_0=z}
\leq
\CondExp{X_{\sigmastern}}{Z_0=z}
\text{ for all }z\in S\label{rhoforderung5}
.\end{eqnarray}
\end{enumerate}
\end{proposition}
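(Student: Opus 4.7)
For Part 1 I would verify each claim directly from definition \eqref{rhodefinition}. On $\{\rho = n\} \cap \{\rho < \sigmastern\}$, the index $j = \inf\{i \in D : Z_n \notin B^{*D_{\leq i}}\}$ is $\mathcal{A}_n$-measurable, and both $\tau_{n+j}(B)$ and $\sigmastern$ are stopping times, so $\widehat{\rho}$ is itself a stopping rule. The property $Z_{\widehat{\rho}} \in B$ on $\{\widehat{\rho}<\infty\}$ follows from $B^{*D} \subseteq B$, and the bound $\widehat{\rho} \leq \sigmastern$ is built into the minimum. The inequality $\sigma \leq \rho \leq n \leq \tau_{n+j}(B)$ on $\{\rho = n\}$ gives $\sigma \leq \widehat{\rho}$, hence \eqref{rhoforderung1}. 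Finally, $D \subseteq \N$ forces $j \geq 1$, so $\widehat{\rho} \geq (n+1) \wedge \sigmastern > n = \rho$ on $\{\rho < \sigmastern\}$, yielding both \eqref{rhoforderung2} and \eqref{rhoforderung3}.

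For \eqref{rhoforderung4}, I would condition on $\{\rho = n\}$ and apply the strong Markov property at time $n$. On $\{\rho = \sigmastern\}$ nothing is to show; on $\{\rho = n < \sigmastern\}$, shifting by $n$ makes $\widehat{\rho}-n$ the stopping rule $\tau_j(B) \wedge \tau_1(B^{*D})$ for the chain started at $Z_n$, where $j$ is the smallest element of $D$ with $Z_n \notin B^{*D_{\leq j}}$. The inequality then reduces to
\begin{equation*}
\CondExp{X_{\tau_j(B) \wedge \tau_1(B^{*D})}}{Z_0=z} \geq g(z)
\end{equation*}
for every such $z \in B \setminus B^{*D_{\leq j}}$. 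Since $z \in B^{*D_{\leq i}}$ for all $i \in D$ with $i < j$, the only violating index in $D_{\leq j}$ must be $j$ itself, so $g(z) \leq \CondExp{X_{\tau_j(B)}}{Z_0=z}$. I would then compare $X_{\tau_j(B) \wedge \tau_1(B^{*D})}$ with $X_{\tau_j(B)}$ pathwise: on $\{\tau_1(B^{*D}) > \tau_j(B)\}$ and on $\{\tau_1(B^{*D}) = m \geq j\}$ the two agree, while on $\{\tau_1(B^{*D}) = m < j\}$ the strong Markov property at $m$, together with $Z_m \in B^{*D}$ and the membership $j-m \in D = \{1,\ldots,k\}$, yields $g(Z_m) \geq \CondExp{X_{\tau_{j-m}(B)}}{Z_0=Z_m}$. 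This delivers $\CondExp{X_{\tau_j(B)\wedge\tau_1(B^{*D})}}{Z_0=z} \geq \CondExp{X_{\tau_j(B)}}{Z_0=z} \geq g(z)$.

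For \eqref{rhoforderung5} I would iterate the improvement: set $\rho_0 = \sigma$ and let $\rho_{m+1}$ be the rule obtained from \eqref{rhodefinition} with $\rho$ replaced by $\rho_m$, which is an admissible input at each step by Part 1. Then $(\rho_m)$ is non-decreasing and bounded above by $\sigmastern$, and by \eqref{rhoforderung3} advances by at least one on $\{\rho_m < \sigmastern\}$, so $\rho_m \uparrow \sigmastern$ pointwise. Iterating \eqref{rhoforderung4} produces a non-decreasing sequence $\CondExp{X_{\rho_m}}{Z_0=z}$, and passing to the limit via the integrability hypothesis from Section~\ref{sec:main_result} yields \eqref{rhoforderung5}.

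The principal obstacle is the estimate in the second paragraph: controlling the cap at $\tau_1(B^{*D})$ inside the expectation. This is exactly where the initial-segment hypothesis on $D$ enters, because the comparison requires $j - m \in D$ for every $m < j$; this is automatic for $D = \{1,\ldots,k\}$ but can fail for arbitrary $D$, which explains why Part 1 is stated for general $D$ while Part 2 imposes the structural assumption.
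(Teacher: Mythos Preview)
Your proposal is correct and follows essentially the same route as the paper's proof. Part~1 is handled identically. For \eqref{rhoforderung4} the paper works directly with the original chain and conditions on $\mathcal{A}_n$ (packaging the two key inequalities as Lemmata~\ref{PartialImproving} and~\ref{PartialImprovingII}), whereas you invoke the strong Markov property to shift to a fresh chain started at $Z_n$; the resulting decomposition according to whether $\tau_1(B^{*D})$ occurs before or after step $j$, and the crucial use of $j-m\in D$ for $1\leq m<j$, are exactly the same in both arguments. Your proof of \eqref{rhoforderung5} by iterating the improvement and passing to the limit via the standing integrability assumption is verbatim the paper's argument.
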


\begin{proof}
	We start by proving \Eqref{rhoforderung1}, \Eqref{rhoforderung2}, \Eqref{rhoforderung3}.
	We have
	\[
	\rho=\widehat\rho=\sigmastern
	\text{ on }
	\set{\rho=\sigmastern}=\set{\rho=\infty}\cup\set{Z_{\rho}\in B^{*D}}
	.\]
	Furthermore, $B^{*\emptyset}=B$ and
	\begin{align*}%\labell{eq}{grosserdurchschnitt}
	B^{*D}=\bigcap\limits_{j\in D}B^{*D_{\leq j}}.
	\end{align*}
	Hence
	\[
	\set{Z_n\not\in B^{*D}}
	=
	\bigcup_{j\in D}\set{j=\inf\sett{i\in D}{Z_n\not\in B^{*D_{\leq i}}}}
	\fabist{0}{n}{<}{\infty}{for all}
	.\]
	So \fabist{0}{n}{<}{\infty}{for all} and $j\in D$ we have that on the event $\set{\rho=n}\cap\set{j=\inf\sett{i\in D}{Z_n\not\in B^{*D_{\leq i}}}}$ it holds that
	\begin{align*}
	\rho<\rho+1=n+1\leq\overbrace{\Klammern{\tau_{n+j}(B)\wedge\sigmastern}}^{=\widehat\rho}\leq\sigmastern
%	\\\text{ on }
	.\end{align*}
	Hence
	\begin{align*}
	\set{\rho<\sigmastern}
	=
	\set{\rho<\infty}\cap\set{Z_{\rho}\not\in B^{*D}}
	=
	\bigcup_{n=0}^{\infty}\set{\rho=n}\cap\set{Z_n\not\in B^{*D}}
	%\labell{eq}{rhokleinersigmastern}
	.\end{align*}
%	\qedhoch{-12mm}
	%\end{proofNamed}
	%\begin{proof}{
	Now we come to the proof of \eqref{rhoforderung4}:
	Let $z\in S$ and define 
	\[
	A
	:=
	\set{\rho<\sigmastern}
%	\omu{\eqref{eq:rhokleinersigmastern}}{=}{}
=
	\set{\rho<\infty}\cap\set{Z_{\rho}\not\in B^{*D}}
	.\]
	We have
	\[
	A^c
	=
	\set{\rho=\infty}\cup\set{Z_{\rho}\in B^{*D}}
	=
	\set{\rho=\sigmastern}
	\subseteq
	\set{\rho=\widehat\rho}
	\]
	and thus obviously
	\[
	\CondExp{\id_{A^c}X_{\rho}}{Z_0=z}
	\leq
	\CondExp{\id_{A^c}X_{\widehat\rho}}{Z_0=z}
	.\]
	Define \fabist{0}{n}{<}{\infty}{for all} and $j\in D$
	\[
	A^{n,j} := \set{\rho=n}\cap\set{j=\inf\sett{i\in D}{Z_n\not\in B^{*D_{\leq i}}}}
	.\]
Hence
	\[
	A=\biguplus\limits_{n=0}^{\infty}\biguplus\limits_{j\in D}^{}A^{n,j}
	.\]
	\fabist{0}{n}{<}{\infty}{For all} and $j\in D$ we have by Lemma \ref{PartialImproving}
	\begin{eqnarray*}
	\CondExp{\id_{A^{n,j}}X_{\rho}}{Z_0=z}
	=
	\CondExp{\id_{A^{n,j}}X_{n}}{Z_0=z}
	\leq%\label{eq.1.5.8.X}
	\CondExp{\id_{A^{n,j}}X_{\tau_{n+j}(B)}}{Z_0=z}
	.\end{eqnarray*}
	Due to dominated convergence it suffices to show \fabist{0}{n}{<}{\infty}{for all} and $j\in D$ the inequality
	\begin{align*}
	\CondExp{\id_{A^{n,j}}X_{\tau_{n+j}(B)}}{Z_0=z}
	\leq%\label{eq.1.5.8.5B}
	\CondExp{\id_{A^{n,j}}X_{\widehat\rho}}{Z_0=z}
	.\end{align*}

	To this end, define
	\[
	E:=\set{\tau_{n+j}(B)\leq\sigmastern}
	.\] 
	We have
	\begin{eqnarray*}
		\tau_{n+j}(B)
		=
		\tau_{n+j}(B)\wedge\sigmastern
		=
		\widehat\rho
		\text{ on }
		A^{n,j}\cap E
		,\end{eqnarray*}
	hence
	\begin{eqnarray*}
	\CondExp{\id_{A^{n,j}\cap E}X_{\tau_{n+j}(B)}}{Z_0=z}
	\leq
	\CondExp{\id_{A^{n,j}\cap E}X_{\widehat\rho}}{Z_0=z}
%	\label{eq.1.5.8.Y}
	.\end{eqnarray*}
	
	\fabist{1}{m}{<}{j}{For all} define
	\begin{eqnarray*}%\label{1.6.2.ZZ}
	A_m:=A^{n,j}\cap\set{Z_{n+m}\in B^{*D}}\cap\bigcap_{l=1}^{m-1}\set{Z_{n+l}\not\in B^{*D}}
	.\end{eqnarray*}
	
	\fabist{1}{m}{<}{j}{For all} and $j-m\in D$
	it holds
	by {Lemma} \ref{PartialImprovingII}
%	(with setting $A:=A_m$, $s:=n+m$, $t:=n+j$)
	\begin{align*}
	\integrala{A_m}{\XX_{\tau_{n+j}(B)}}
%	\omu{\Ref{Lemma}{PartialImprovingII}}{\leq}{}
\leq
	\integrala{A_m}{\XX_{n+m}}
%	\omu{$A_m\subseteq\set{n+m=\sigmastern}$}{=}{}
=	\integrala{A_m}{\XX_{\sigmastern}}
%	\omu{$A_m\subseteq E^c$}{=}{}
=
	\integrala{A_m}{\XX_{\widehat\rho}}
%	\label{eq.1.5.8.Z}
	.\end{align*}
	
	We will prove next
	\begin{align*}
	\Klammern{A^{n,j}\cap E^c}
	=
	\biguplus_{m=1}^{j-1}A_m
%	\label{eq.1.5.8.ZZ}
	.\end{align*}
	Due to linearity of expectation this finishes the proof.
%\begin{proofNamed}
%	{proof of \eqref{eq.1.5.8.ZZ}}~\\
%	\textbff{Disjoint:}

	\fabist{1}{m,l}{<}{j}{For all}, $m\neq l$ we have
	\[
%	\emptyset
%	\subseteq
	A_m\cap A_l
	\subseteq
	\set{Z_{n+m}\in B^{*D}}\cap\set{Z_{n+m}\not\in B^{*D}}
	=
	\emptyset
	\text{, hence }
	A_m\cap A_l = \emptyset
	.\]
%	\textbff{{\textbff{$\supseteq$}}:}
	\fabist{1}{m}{<}{j}{For all} we have due to $n+m<n+j\leq\tau_{n+j}(B)$ and the definition of $A_m$
	\[
	A_m
%	\omu{\eqref{1.6.2.ZZ}}{\subseteq}{}
\subseteq	\set{\sigmastern=n+m}
	\subseteq
	\set{\sigmastern<\tau_{n+j}(B)}
%	\omu{def.}{=}{}
=
	E^c
	.\]
	
%	\textbff{{$\subseteq$}:}
	Let $\omega\in A^{n,j}\cap E^c$.
	\\\fabist{0}{l}{<}{\infty}{For all} 
	with $n+j\leq l<\tau_{n+j}(B)(\omega)=\inf\sett{l\geq n+j}{Z_l(\omega)\in B}$
	we obviously have $Z_l(\omega)\not\in B$ and due to $B^{*D}\subseteq B$ we have $Z_l(\omega)\not\in B^{*D}$.
	\\Since we consider
	\[
	\omega
	\in
	A^{n,j}\cap E^c
	\subseteq
	\set{\sigmastern<\tau_{n+j}(B)}\cap\set{\rho=n\vphantom{\tau_j^{(D}}}
	,\]
	hence 
	\[
	\tau_{n+j}(B)(\omega)
	>
	\sigmastern(\omega)
	=
	\inf\sett{l\geq\sigma(\omega)}{Z_l(\omega)\in B^{*D}}
	,\]
	we obtain $n+j>\sigmastern(\omega)$ and  $\sigmastern(\omega)\geq\rho(\omega)=n$.
	By the definition of $A^{n,j}$ we have $Z_n(\omega)\not\in B^{*D_{\leq j}}\supseteq B^{*D}$, 
	hence $\sigmastern(\omega)\neq n$.
	So we have $n<\sigmastern(\omega)<n+j$, 
	thus it follows $\omega\in A_{\left[\sigmastern(\omega)-n\right]}$ by definition.
%\end{proofNamed}\fi

%\ifBeweiseDrucken\begin{proofNamed}
	It remains to prove \eqref{rhoforderung5}.
	Define $\sigma_0:=\sigma$ and inductively
	\[
	\sigma_{k+1}:=\widehat\sigma_{k} \fabist{0}{k}{<}{\infty}{for all}
	.\]
	
%	First we will prove by induction that
A  simple induction yields
	\begin{eqnarray*}
	\sigma_k\leq\sigma_{k+1}\leq\sigmastern
	\fabist{0}{k}{<}{\infty}{for all}%\label{eq.1.5.8.3}
	.\end{eqnarray*}
%	We have $\sigma=\sigma_0\leq\sigmastern$.
%	\\Then we have $\sigma=\sigma_0\leq\sigma_1\leq\sigmastern$ by \eqref{rhoforderung1} and \eqref{rhoforderung2}.
%	\\Consider \fabist{0}{k}{<}{\infty}{} with $\sigma\leq\sigma_k\leq\sigmastern$.
%	\\It follows $\sigma\leq\sigma_k\leq\sigma_{k+1}\leq\sigmastern$ by \eqref{rhoforderung1} and \eqref{rhoforderung2}.
	
%	By \eqref{eq.1.5.8.3}  we have
Therefore,
	\[
	\lim_{k\to\infty}{\sigma_k}\leq\sigmastern
	\]
	and by \eqref{rhoforderung3}
	\[
	\sigma_k+1\leq\sigma_{k+1}
	\text{ on }\set{\sigma_k<\sigmastern}
	\fabist{1}{k}{<}{\infty}{for all}
	,\]
	hence  we have
	\[
	\sigma_0\leq\sigma_1\leq\sigma_2\leq...\leq\lim_{k\to\infty}\sigma_k=\sigmastern
	.\]
	
	Let $z\in S$.
	We have \fabist{0}{k}{<}{\infty}{for all} by \eqref{rhoforderung4}
	\begin{align*}
	\CondExp{X_{\sigma_{k}}}{Z_0=z}
	&\leq
	\CondExp{X_{\sigma_{k+1}}}{Z_0=z}\label{eq.1.5.8.7}
	&&
	\text{}
	.\end{align*}
	We infer using our standing assumption
	\begin{align*}
	\CondExp{X_{\sigma}}{Z_0=z}
	&=
	\CondExp{X_{\sigma_0}}{Z_0=z}
	\leq
	\lim\limits_{k\to\infty}\CondExp{X_{\sigma_{k}}}{Z_0=z}
%	&&%\text{by \eqref{eq.1.5.8.7}}
	\\	&=
	\CondExp{X_{\lim\limits_{k\to\infty}\sigma_{k}}}{Z_0=z}
%	&&%\text{by \eqref{eq:Lemma:MainMarkov:2}}
		=
	\CondExp{X_{\sigmastern}}{Z_0=z}
%	.&&\qedhere
	\end{align*}
\end{proof}

\begin{example}
	%\labell{Example}{57d0959}
	%=====================================================
	Using the stopping rule
	\[
	\widetilde\rho
	:=
	\begin{cases}
	\rho,&\mbox{ on }\{\rho=\sigmastern\},\\
	\tau_{n+j}(B),&\mbox{ on }\{\rho=n,\inf\sett{i\in D}{Z_n\not\in B^{*D_{\leq i}}}=j\},
	\end{cases}
%	\rho\id_{\set{\rho=\sigmastern}}
%	+\sum_{n=0}^{\infty}\sum_{j\in D}
%	\id_{\set{\rho=n}\cap\set{j=\inf\sett{i\in D}{Z_n\not\in B^{*D_{\leq i}}}}}
%	\tau_{n+j}(B)
	\]
	instead of $\widehat \rho$ in \eqref{rhodefinition} looks natural at first glance. However, then the properties \eqref{rhoforderung1} and \eqref{rhoforderung5} may not hold.
	We can see this by the following example:
	Consider a Markov chain given by
	%with five states $a$, $b$, $c$, $d$, $e$ and the following transition matrix
	%\[
	%\begin{array}{l|ccccc} & a & b & c & d & e \\
	%\hline
	%a & 0 & \frac{1\vphantom{1^2}}{3} & \frac{1}{3} & \frac{1}{3} & 0 \\
	%b & 0 & 1 & 0 & 0 & 0 \\
	%c & 0 & 1 & 0 & 0 & 0 \\
	%d & 0 & 0 & 0 & 0 & 1 \\
	%e & 0 & 0 & 0 & 0 & 1 \\
	%\end{array}
	%\]
	%as drawn in the following picture
	\[
	\entrymodifiers={++[o][F-]} 
	\SelectTips{cm}{} 
	\xymatrix @-1pc {
		c \ar[drr]_{1}
		& *\txt{}
		&a \ar[rr]^{1/3} \ar[ll]_{1/3} \ar[d]^{1/3}
		%\ar `dr_l[l] `_ur[ll] ^(.2){1/3} [ll] 
		&*\txt{}
		&{d} \ar[rr]^1
		&*\txt{}
		%\ar `ur^l[lll]`^dr[lll]^b [lll] 
		%\ar `dr_l[ll] `_ur[ll] [ll] \\
		& e \ar@(u,r)[]^{1} 
		\\
		*\txt{}&*\txt{}& b \ar@(r,d)[]^{1} }
	\]
	
	%	\begin{center}
	%	\includegraphics[width=8cm]{markovbeispiel.png}%
	%	\end{center}
	
	and the rewards, i.e. the function $g$
	\[
	\begin{array}{r|l} \text{state}&\text{payment}\\\hline a & 3\\b&4\\c&1.5\\d&2.5\\e&2\end{array}
	\]
	and we will use $D=\set{1,2}$ with $\alpha\equiv1$.
	We initialize the algorithm with 
	\[
	B=\set{a,b,c,d,e}.
	\] 
	Hence we have 
	\[
	B^{*\set 1}=\set{a,b,d,e}
	\]
	and
	\[
	B^{*\set{1,2}}=\set{b,d,e}
	.\]
	Now we determine $\widetilde{\rho}$ for $\rho:=\tau_\sigma(B^{*\set 1})$.
	On $\set{Z_0\neq a}$ we have $\widetilde{\rho}=\tau_\sigma(B^{*\set{1,2}})$.\\
	If $Z_0=a$,
	then $\rho=n=0$, hence 
	\[
	j:=\inf\sett{i\in D}{Z_n\not\in B^{*D_{\leq i}}}=\inf\sett{i\in D}{a\not\in B^{*D_{\leq i}}}=2
	\]
	and thus on $\set{Z_0= a}$
	\begin{align*}
	\widetilde{\rho}
	=\tau_{n+j}(B)
	=\tau_{0+2}(B)
	=\tau_{2}(B)
	&=\inf\sett{n\geq2}{Z_n\in\set{b,e}}
	\\	&>\inf\sett{n\geq0}{Z_n\in\set{b,d,e}}
	=\tau_\sigma(B^{*\set{1,2}})
	.\end{align*}
	%This is contradicting \eqref{rhoforderung1}, 
	%and this $\widehat{\rho}$ gives a smaller expected revenue, contradicting \eqref{eq:rhoforderung5}.
	Hence it is necessary to take the minimum with $\sigma^{*D}$ for a sufficient improvement, in the sense of having $\widetilde{\rho}$ always smaller than $\sigma^{*D}$ and not giving a smaller expected revenue.
	%\fi
\end{example}

\section{Proof of Theorem \ref{thm:MainMarkov}}\label{sec:proof}
\begin{proof}
	Let \fabist{0}{k}{<}{\infty}{}. 
	We have 
	\[
	B^{k+1}=(B^k)^{*\kappa(k)}\omu{}{\supseteq}{}B^k
	\]
	and 
\[
\tau_0(B^k)
\leq\tau_{\tau_0(B^k)}((B^k)^{*\kappa(k)})
=\tau_{\tau_0(B^k)}(B^{k+1})
=\tau_0(B^{k+1})
.\]
%	(Set $B$ and $\sigma$ of the cited lemma to $B^k$ and $\tau_j(B^k)$.)
		Hence we can infer \eqref{1.6.3.A} and \eqref{1.6.3.B}, see Proposition \ref{MainMarkov}.
%\end{proofNamed}
%\begin{proofNamed}{of \eqref{eq:1.6.3.B}}
\eqref{1.6.3.C} also immediately follows using Proposition \ref{MainMarkov}.
%\end{proofNamed}

Now, we come to the proof of the optimality.\\
%\begin{proofNamed}{of }~\\
(1) At first we will show that for any $\rho\in\MCStoppzeitenMenge[B^0]$ there exists
\begin{center}
	\(
	\rho^{\infty}\in\MCStoppzeitenMenge[F]
	\)
	with $\rho^{\infty}\geq\rho$ 
	and $\CondExp{X_{\rho}}{Z_0=z}\leq\CondExp{X_{\rho^{\infty}}}{Z_0=z}$ for all $z\in S$.
\end{center}
Let $\rho\in\MCStoppzeitenMenge[B^0]$. Define 
\begin{eqnarray*}
	\rho^k
	&:=&
	\inf\sett{n\geq \rho}{Z_n\in B^k}\fabist{0}{k}{<}{\infty}{for all},
	\\	\rho^{\infty}
	&:=&
	\inf\sett{n\geq\rho}{Z_n\in F}
	.\end{eqnarray*}
Since $(B^k)_{k\in\NN}$ is a non-increasing sequence of sets with limit $F$,
$(\rho^k)_{k\in\NN}$ is a non-decreasing sequence of stopping rules with limit $\rho^{\infty}$.
%Let $k\in\NN$. By {Proposition} \ref{MainMarkov} we have 
%\begin{eqnarray*}
%	\CondExp{X_{\rho^k}}{Z_0=z}	
%	\leq
%	\CondExp{X_{\rho^{k+1}}}{Z_0=z}	
%	\text{ for all }z\in S
%	.\end{eqnarray*}
%(Set $B$ and $\sigma$ of the cited lemma to $B^k$ and $\rho^k$,
%then we have $\tau_{\rho^k}((B^k)^{*\kappa(k)})=\rho^{k+1}$.)
Since $\rho\in\MCStoppzeitenMenge[B^0]$, it furthermore holds that $\rho=\rho^0$.
Using {Proposition} \ref{MainMarkov} and our standing assumption, it follows
\begin{align*}
\CondExp{X_{\rho}}{Z_0=z}	
&~\leq~
\Lim_{k\to\infty}\CondExp{X_{\rho^{k}}}{Z_0=z}	
\\	&%\!\omu{\eqref{eq:Lemma:MainMarkov:2}}{\leq}{}
\leq
\CondExp{\!X_{\Lim_{k\to\infty}\rho^{k}}}{Z_0=z\!}	
\!=
\CondExp{X_{\rho^{\infty}}}{Z_0=z}	
\text{ for all }z\in S.\!\!\!\!\!\!
\end{align*}
%\vspace{-2mm}\hrule\vspace{2mm}%=====================================================
(2)
 Let $\rho,\tau\in\MCStoppzeitenMenge[F]$ such that $\rho\leq\tau$.
We will now show 
\[
\CondExp{\XX_{\tau}}{Z_0=z}
\leq
\CondExp{\XX_{\rho}}{Z_0=z}
\text{ for all }z\in S
.\]
Define
\[
\rho_k
:=
\rho\id_{\set{\rho=\tau}}
+
\id_{\set{\rho<\tau}}\sum_{n=0}^{\infty}\id_{\set{\rho=n}}\tau_{n+1}(B^k)
\fabist{0}{k}{<}{\infty}{for all}
,\]
\[
\rho_{\infty}
:=
\rho\id_{\set{\rho=\tau}}
+
\id_{\set{\rho<\tau}}\sum_{n=0}^{\infty}\id_{\set{\rho=n}}\tau_{n+1}(F)
.\]
Then we have 
$\rho\leq\rho_k\leq\rho_{k+1}\leq\rho_{\infty}\leq\tau$ \fabist{0}{k}{<}{\infty}{for all}
and 
$\Lim_{k\to\infty}\rho_k=\rho_{\infty}$.
Furthermore we have 
\[
\rho+1=n+1\leq\tau_{n+1}(F)=\rho_{\infty}
\text{ on }
\set{\rho<\tau}\cap\set{\rho=n}
\fabist{0}{n}{<}{\infty}{for all}
,\]
hence
\begin{eqnarray}
\rho+1\leq\rho_{\infty}
\text{ on }
\biguplus_{n=0}^{\infty}\set{\rho=n}\cup\set{\rho<\tau}
=
\set{\rho\neq\tau}
\label{rhoplus1}
.\end{eqnarray}
Let $(Z_n')_{n\in\NN}$ be an independent copy of $(Z_n)_{n\in\NN}$ and 
\fabist{0}{p}{<}{\infty}{for all} and $B\subseteq S$ let $\tau_p'(B)=\inf\sett{q\geq p}{Z_q'\in B}$.
Let \fabist{0}{n}{<}{\infty}{} and \fabist{0}{k}{<}{\infty}{}.
Since $\rho\in\MCStoppzeitenMenge[F]$ we have $Z_{\rho}\in F$ on $\set{\rho<\infty}$ and thus
\begin{eqnarray*}
	\set{\rho=n}\cap\set{\rho<\tau}
	&\subseteq&
	\set{Z_n\in F}
	\\&\subseteq&
	\set{Z_n\in (B^k)^{*\kappa(k))}}
	\\&\subseteq&
	\set{g(Z_n)\geq\CondExp{\Klammern{\prod_{i=0}^{\tau'_{1}(B^k)-1}\alpha(Z_i')}g(Z_{\tau'_{n+1}(B^k)}')}{Z_0'=Z_n}}
	,\end{eqnarray*}
hence we have for all $z\in S$
\begin{eqnarray*}
	\integrala{\set{\rho=n<\tau}}{\XX_{\rho_k}}
	&=&
	\integrala{\set{\rho=n<\tau}}{\Klammern{\prod_{i=0}^{\tau_{n+1}(B^k)-1}\alpha(Z_i)}g(Z_{\tau_{n+1}(B^k)})}
	\\&=&
	\integrala{\set{\rho=n<\tau}}{
		\CondExp{\Klammern{\prod_{i=0}^{\tau'_{1}(B^k)-1}\alpha(Z_i')}g(Z_{\tau'_{n+1}(B^k)}')}{Z_0'=Z_n}
		\prod_{i=0}^{n-1}\alpha(Z_i)}
	\\&\leq&
	\integrala{\set{\rho=n<\tau}}{g(Z_n)\prod_{i=0}^{n-1}\alpha(Z_i)}
	\\&=&
	\integrala{\set{\rho=n<\tau}}{\XX_{\rho}}
	.\end{eqnarray*}
By dominated convergence %it
follows
\begin{eqnarray*}
	\CondExp{\XX_{\rho_k}}{Z_0=z}
	\leq
	\CondExp{\XX_{\rho}}{Z_0=z}
	\text{ for all }z\in S
	.\end{eqnarray*}
Letting $k\to\infty$ we obtain 
\begin{eqnarray*}%\label{1.6.3.G}
\CondExp{\XX_{\rho_{\infty}}}{Z_0=z}
\leq
\CondExp{\XX_{\rho}}{Z_0=z}
\text{ for all }z\in S
.\end{eqnarray*}
Define $\rho^0=\rho$ and $\rho^k=(\rho^{k-1})_{\infty}$ \fabist{1}{k}{<}{\infty}{for all}.
Then obviously $\rho\leq\rho^k\leq\rho^{k+1}\leq\tau$, and $\rho^k\in\MCStoppzeitenMenge[F] \fabist{1}{k}{<}{\infty}{for all}$.
Hence we have by \eqref{rhoplus1} $\lim\limits_{k\to\infty}\rho^k=\tau$.\\
It follows for all $z\in S$ using our standing assumption
\[
\CondExp{\XX_{\tau}}{Z_0=z}
=
\CondExp{\XX_{\lim\limits_{k\to\infty}\rho^k}}{Z_0=z}
%\omu{\eqref{eq:Lemma:MainMarkov:2}}{=}{}
=
\lim\limits_{k\to\infty}\CondExp{\XX_{\rho^k}}{Z_0=z}
%\omu{\eqref{eq:1.6.3.G}}{\leq}{}
\leq 
\CondExp{\XX_{\rho}}{Z_0=z}
.\]
%\hrule\vspace{2mm}
(3)
Now, we prove that $\tau(F)$ is optimal. 
Let 
%\fabist{0}{j}{<}{\infty}{} and 
$\sigma\in\MCStoppzeitenMenge[B^0]$.
% with $\sigma\geq j$.
By step 1, there exists $\tau\in\MCStoppzeitenMenge[F]$ with $\tau\geq \sigma$ such that
\[
\CondExp{\XX_{\sigma}}{Z_0=z}
\leq
\CondExp{\XX_{\tau}}{Z_0=z}
\text{ for all }z\in S
.\]
Obviously we have 
%$\tau\geq j$ and thus
 $\tau_0(F)\leq\tau$ due to $\tau\in\MCStoppzeitenMenge[F]$.
Hence it follows by step 2 that
\[
\CondExp{\XX_{\sigma}}{Z_0=z}
\leq
\CondExp{\XX_{\tau}}{Z_0=z}
\leq
\CondExp{\XX_{\tau_0(F)}}{Z_0=z}
\text{ for all }z\in S
.\]
\end{proof}

%\begin{remark}
%By a slight modification of the arguments in the previous proof we even get that $\tau_j(F)$ is optimal in $\{\tau\in\Tau(B^0);\tau\geq j\}$ for all\ $j\in \N$.
%\end{remark}

\section{Finite state space}\label{sec:finite_state}
%In this chapter we consider the Markovian case with random discounting as discussed in \TextRef{Section}{Markov} 
We now assume additionally that we have a finite state space. 
The algorithm requires the calculation of many conditional expectations. 
We show in this section that these expectations can be found by solving linear equations. 

%We continue with the setting of \TextRef{Setting}{71} and additionally assume finite $S$.
%\\
Let $\Pi$ be the transition matrix
and define $\Psi$ by $\Psi_{z,y}:=\alpha(z)\cdot\Pi_{z,y}$ for all $z,y\in S$.

Write for all $p\in\N$ and all $B\subseteq S$
\[
\functionn{h'_{B,p}}{S}{\R}{z}
\CondExp{X_{\tau_p(B)}}{Z_0=z}
=\CondExp{g\Klammern{Z_{\tau_p(B)}}\prod_{j=0}^{\tau_p(B)-1}\alpha(Z_i)}{Z_0=z}
.\]
%=====================================================
%\subsectionM
%{Theorem}
%\labell{Theorem}{82}
%=====================================================
\begin{proposition}
For all $B\subseteq S$, $p\in\N$ and $z\in S$ we have
\[
h'_{B,p}(z)=\sum_{y\in S}\Klammern{\Psi^p}_{z,y}h'_{B,0}(y)
.\]
\end{proposition}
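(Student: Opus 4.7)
The plan is to prove the identity by conditioning at time $p$ and exploiting the strong Markov property, turning $h'_{B,p}$ into $h'_{B,0}$ weighted by the $p$-step transition structure with discounting.

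First I would observe the fundamental path decomposition: since $\tau_p(B)=\inf\{k\geq p:Z_k\in B\}\geq p$, writing $\theta_p$ for the shift, we have $\tau_p(B)=p+\tau_0(B)\circ\theta_p$, and the discount splits as
\[
\prod_{i=0}^{\tau_p(B)-1}\alpha(Z_i)
=\Bigl(\prod_{i=0}^{p-1}\alpha(Z_i)\Bigr)\cdot\Bigl(\prod_{j=0}^{\tau_0(B)-1}\alpha(Z_j)\Bigr)\circ\theta_p,
\]
while $g(Z_{\tau_p(B)})=g(Z_{\tau_0(B)})\circ\theta_p$. Multiplying and invoking the Markov property (conditioning on $\mathcal{A}_p$, noting that $\prod_{i=0}^{p-1}\alpha(Z_i)$ is $\mathcal{A}_p$-measurable), the inner conditional expectation evaluates exactly to $h'_{B,0}(Z_p)$, yielding
\[
h'_{B,p}(z)=\CondExp{\Bigl(\prod_{i=0}^{p-1}\alpha(Z_i)\Bigr)\,h'_{B,0}(Z_p)}{Z_0=z}.
\]

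Next I would expand this expectation as a finite sum over paths in $S$, using that the state space is finite (so no convergence issues arise). For any fixed $p$,
\[
h'_{B,p}(z)=\sum_{z_1,\ldots,z_p\in S}\alpha(z)\Pi_{z,z_1}\alpha(z_1)\Pi_{z_1,z_2}\cdots\alpha(z_{p-1})\Pi_{z_{p-1},z_p}\,h'_{B,0}(z_p).
\]
Recognizing each factor $\alpha(z_{i-1})\Pi_{z_{i-1},z_i}=\Psi_{z_{i-1},z_i}$ (with the convention $z_0=z$), the sum telescopes into matrix multiplication:
\[
\sum_{z_1,\ldots,z_{p-1}\in S}\Psi_{z,z_1}\Psi_{z_1,z_2}\cdots\Psi_{z_{p-1},z_p}=(\Psi^p)_{z,z_p},
\]
so that $h'_{B,p}(z)=\sum_{z_p\in S}(\Psi^p)_{z,z_p}h'_{B,0}(z_p)$, as claimed.

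The only mildly subtle step is handling the event $\{\tau_p(B)=\infty\}$ correctly in the conditioning, but this is absorbed by the standing assumption that $\CondExp{X_\tau}{Z_0=z}$ exists for all stopping rules $\tau$ and the formal convention $X_\infty=\prod_{i=0}^{\infty}\alpha(Z_i)g(Z_\infty)$; on this event the factor $\prod_{i=0}^{p-1}\alpha(Z_i)$ is still well-defined and the Markov factorization remains valid. Since everything else is a finite sum (the state space being finite), no dominated-convergence argument is required, and the proof is essentially a bookkeeping exercise in the Markov property.
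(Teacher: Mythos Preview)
Your argument is correct. The paper's proof differs in structure: it conditions only one step ahead to obtain the recursion $h'_{B,p}=\Psi\,h'_{B,p-1}$ and then closes by induction on $p$, whereas you condition at time $p$ in one shot (via the shift $\theta_p$) and then expand the resulting $p$-step expectation over paths to read off $(\Psi^p)_{z,y}$ directly. These are two sides of the same Markov-property coin: your approach is marginally more direct in that it produces $\Psi^p$ without an explicit inductive step, while the paper's one-step recursion makes the semigroup relation $h'_{B,p}=\Psi h'_{B,p-1}$ visible as an intermediate result (useful elsewhere). One small remark: what you invoke is the ordinary Markov property at the deterministic time $p$, not the strong Markov property, and the event $\{\tau_p(B)=\infty\}$ indeed causes no trouble here since it is absorbed into $h'_{B,0}(Z_p)$ after conditioning on $\mathcal{A}_p$.
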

\begin{proof}
	Let $B\subseteq S$. 
	\\We have for all $p\in\N$ and $z\in S$ using the Markov property
	\begin{eqnarray*}
		h'_{B,p}(z)
		&=&
		\CondExp{g\Klammern{Z_{\tau_p(B)}}\prod_{j=0}^{\tau_p(B)-1}\alpha(Z_i)}{Z_0=z}
		\\	&=&
%		\alpha(z)\CondExp{g\Klammern{Z_{\tau_p(B)}}\prod_{j=1}^{\tau_p(B)-1}\alpha(Z_i)}{Z_0=z}
%		\\	&=&
%		\alpha(z)\sum_{y\in S}\bedWkt{P}{Z_1=y}{Z_0=z}
%		\CondExp{g\Klammern{Z_{\tau_p(B)}}\prod_{j=1}^{\tau_p(B)-1}\alpha(Z_i)}{Z_1=y,Z_0=z}
%		\\	&=&
		\sum_{y\in S}\alpha(z)\bedWkt{P}{Z_1=y}{Z_0=z}
		\CondExp{g\Klammern{Z_{\tau_{p-1}(B)}}\prod_{j=0}^{\tau_{p-1}(B)-1}\alpha(Z_i)}{Z_0=y}
%		\\	&=&\sum_{y\in S}\alpha(z)\Pi_{z,y}
%		\CondExp{g\Klammern{Z_{\tau_{p-1}(B)}}\prod_{j=0}^{\tau_{p-1}(B)-1}\alpha(Z_i)}{Z_0=y}
		\\	&=&\sum_{y\in S}\Psi_{z,y}
		\CondExp{g\Klammern{Z_{\tau_{p-1}(B)}}\prod_{j=0}^{\tau_{p-1}(B)-1}\alpha(Z_i)}{Z_0=y}
		\\	&=&\sum_{y\in S}\Psi_{z,y}
		h'_{B,p-1}(y)
		.\end{eqnarray*}
	Hence we have 
	\[	
	h'_{B,p}=\Psi\cdot h'_{B,p-1}
	.\]
	By induction we have for all $p\in\N$
	\[	
	h'_{B,p}=\Psi^p\cdot h'_{B,0}.
	\]
%	hence for all $p\in\N$ and $z\in S$ we have
%	\[
%	h'_{B,p}(z)
%	=
%	\sum_{y\in S}(\Psi^p)_{z,y}h'_{B,0}(y)
%	.\qedhere\]
\end{proof}
%\begin{eqnarray*}
%&&\CondExp{\alpha^{\tau_p(B)}g\Klammern{Z_{\tau_p(B)}}}{Z_0=z}
%&=&\sum_{y\in S}\bedWkt{P}{Z_p=y}{Z_0=z}\CondExp{\alpha^{\tau_p(B)}g\Klammern{Z_{\tau_p(B)}}}{Z_p=y,Z_0=z}
%&=&\sum_{y\in S}\left(\Pi^p\right)_{z,y}\CondExp{\alpha^{\tau_p(B)}g\Klammern{Z_{\tau_p(B)}}}{Z_p=y,Z_0=z}
%&\omu{MC}{=}{}&\sum_{y\in S}\left(\Pi^p\right)_{z,y}\CondExp{\alpha^{\tau_p(B)}g\Klammern{Z_{\tau_p(B)}}}{Z_p=y}
%&=&\sum_{y\in S}\left(\Pi^p\right)_{z,y}\alpha^p\CondExp{\alpha^{\tau_0(B)}g\Klammern{Z_{\tau_0(B)}}}{Z_0=y}
%&=&\alpha^p\sum_{y\in S}\left(\Pi^p\right)_{z,y}\CondExp{\alpha^{\tau_0(B)}g\Klammern{Z_{\tau_0(B)}}}{Z_0=y}
%&=&\alpha^p\sum_{y\in S}\left(\Pi^p\right)_{z,y}h'(y)
%&=&\sum_{y\in S}\left(\Klammern{\alpha\Pi}^p\right)_{z,y}h'(y)
%.\end{eqnarray*}
%Thus for all $B\subseteq S$ we have %\fabist{1}{\kappa}{<}{\infty}{for all}
%\begin{eqnarray*}
%	B^*
%	&=&
%	\sett{z\in B}{g(z)\geq \sup
%		\sett{\CondExp{g\Klammern{Z_{\tau_{p} (B)}}\prod_{j=1}^{\tau_p(B)-1}\alpha(Z_i)}{Z_0=z}}{1\leq p\leq \kappa}}
%\\	&=&
%	\sett{z\in B}{g(z)\geq \sup
%		\sett{\sum_{y\in S}\Klammern{\Psi^p}_{z,y}h'(y)}{1\leq p\leq \kappa}}
%\\	&=&
%	\sett{z\in B}{g(z)\geq \sup_{1\leq p\leq \kappa}\sum_{y\in S}\Klammern{\Psi^p}_{z,y}h'(y)}
%.\end{eqnarray*}
%=====================================================
%\subsectionM
%{Corollar}
%=====================================================

We immediately obtain:

\begin{corollary}
	We have for all
$B\subseteq S$ and $D\subseteq\N$
\[
B^{*D}=\sett{z\in B}{g(z)\geq \sup_{p\in D}\sum_{y\in S}\Klammern{\Psi^p}_{z,y}h'_{B,0}(y)}
.\]
\end{corollary}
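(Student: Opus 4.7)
The plan is to unwind the definition of $B^{*D}$ and then substitute the formula for $h'_{B,p}$ supplied by the preceding proposition. Recall that by definition
\[
B^{*D}=\sett{z\in B}{g(z)\geq\CondExp{X_{\tau_k(B)}}{Z_0=z}\text{ for all }k\in D},
\]
and that $h'_{B,p}(z)$ is precisely the conditional expectation $\CondExp{X_{\tau_p(B)}}{Z_0=z}$. So the first step is simply to rewrite the defining condition as: $z\in B^{*D}$ iff $z\in B$ and $g(z)\ge h'_{B,k}(z)$ for every $k\in D$.

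Next, I would invoke the preceding proposition, which gives the closed form
\[
h'_{B,p}(z)=\sum_{y\in S}(\Psi^p)_{z,y}\,h'_{B,0}(y)
\]
for every $p\in\N$ and $z\in S$. Substituting this into the condition from the previous paragraph, membership in $B^{*D}$ becomes: $z\in B$ and, for every $k\in D$, $g(z)\ge\sum_{y\in S}(\Psi^k)_{z,y}\,h'_{B,0}(y)$.

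Finally, I would use the elementary equivalence that a real number $g(z)$ dominates a family $\{a_k:k\in D\}$ pointwise if and only if it dominates their supremum, i.e.\ $g(z)\ge a_k$ for all $k\in D$ is the same as $g(z)\ge\sup_{p\in D}a_p$. Applying this with $a_p=\sum_{y\in S}(\Psi^p)_{z,y}h'_{B,0}(y)$ yields the claimed description of $B^{*D}$. There is no real obstacle here: the only minor point worth mentioning is that when $D=\emptyset$ the supremum is $-\infty$ by convention, so the condition reduces to $z\in B$, which also matches $B^{*\emptyset}=B$ as already used in the proof of Proposition~\ref{MainMarkov}.
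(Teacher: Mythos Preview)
Your proof is correct and follows exactly the route the paper intends: the corollary is stated there with ``We immediately obtain'' and no further argument, since it is just the definition of $B^{*D}$ combined with $h'_{B,p}(z)=\CondExp{X_{\tau_p(B)}}{Z_0=z}$ and the matrix formula from the preceding proposition, together with the trivial passage from ``$\geq$ for all $k\in D$'' to ``$\geq\sup$''. Your remark on the $D=\emptyset$ case is a harmless bonus.
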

%\begin{proof}
%%	Due to the definitions 
%%	t
%	The iterative step is performed for all $B\subseteq S$ and $D\subseteq\N$ by
%	\begin{eqnarray*}
%		B^{*D}	&=&
%		\sett{z\in B}{g(z)\geq \sup_{p\in D}h'_{B,p}(z)}
%		,\end{eqnarray*}
%	so the statement follows by the previous proposition
%\end{proof}

%=====================================================
%\pagebreak[0]\subsectionfootnote
%{FII by Solving Linear Equations}
%{\cite[part 2 and part 3]{Irl09}}
%=====================================================

We may now rewrite the approach using matrices: 
Define the vector $b$ by 
\[
b_i:=
\begin{cases}
1&i\in B,
\\	0&i\not\in B,
\end{cases}
~~~\text{ for all }i\in S
,\]
the vector $d$ by
\[
d_i:=b_i\cdot g(i)\text{ for all }i\in S
\]
and the matrix $A$ by 
\[
A_{i,j}:=\delta_{i,j}-(1-b_i)\Psi_{i,j}
\text{ for all }i,j\in S
.\]

\begin{corollary}
Let $B\subseteq S$ and assume that {for all }$z\in S\ $
\begin{eqnarray}
%\Klammern{
%	\EckigeKlammern
	{\alpha(z)<1}
	\text{ or }
%	\EckigeKlammern
	{\bedWkt{P}{\tau_0(B)<\infty}{Z_0=z}=1}
%}
\label{bedmc}
.\end{eqnarray}
Then $h'_{B,0}$ is the unique solution of the system of linear equations $Ah=d$.
\end{corollary}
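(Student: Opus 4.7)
The plan is to separate existence (that $h'_{B,0}$ is a solution) from uniqueness (which is where condition \eqref{bedmc} is needed).

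\textbf{Existence.} First I would verify that $h'_{B,0}$ satisfies $Ah=d$ by a case split on whether $z\in B$.  For $z\in B$ one has $\tau_0(B)=0$, hence $h'_{B,0}(z)=g(z)$, and since $b_z=1$ the corresponding row of $A$ reduces to $A_{z,\cdot}=\delta_{z,\cdot}$ with $d_z=g(z)$, so the equation holds trivially.  For $z\notin B$ one has $\tau_0(B)\geq 1$; conditioning on the first step and using the Markov property together with the definition of $\Psi$ gives
\[
h'_{B,0}(z)=\alpha(z)\sum_{y\in S}\Pi_{z,y}h'_{B,0}(y)=\sum_{y\in S}\Psi_{z,y}h'_{B,0}(y),
\]
which is exactly the row $A_{z,\cdot}h=d_z=0$ (since $b_z=0$).

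\textbf{Uniqueness.} It suffices to show that the homogeneous system $Ah=0$ only has the zero solution. For $z\in B$ the $z$-th row of $A$ is $\delta_{z,\cdot}$, so $h_z=0$ immediately. For $z\notin B$, the equation reads $h_z=\sum_{y}\Psi_{z,y}h_y=\alpha(z)\,E[h(Z_1)\mid Z_0=z]$.  Splitting the sum over $y\in B$ (where $h_y=0$) and $y\notin B$ and iterating by induction on $n$ yields the probabilistic representation
\[
h_z = E\!\left[\prod_{i=0}^{n-1}\alpha(Z_i)\,\mathbf 1_{\{\tau_0(B)>n\}}\,h(Z_n)\,\middle|\,Z_0=z\right]
\quad\text{for every }n\geq 1.
\]
Since the state space is finite, $h$ is bounded, so it remains to show that $R_n(z):=E[\prod_{i=0}^{n-1}\alpha(Z_i)\mathbf 1_{\{\tau_0(B)>n\}}\mid Z_0=z]\to 0$ as $n\to\infty$.

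\textbf{The main obstacle.} The delicate point is precisely this tail estimate; it is where condition \eqref{bedmc} is used in a non-trivial way.  The plan is the following.  Set $C:=\{z\in S:\Wkt{P}{\tau_0(B)<\infty\mid Z_0=z}=1\}$, so that $B\subseteq C$ and, by \eqref{bedmc}, $\alpha(z)<1$ for every $z\notin C$.  The strong Markov property gives $\Wkt{P}{\tau_0(B)=\infty,\,Z_n\in C\text{ for some }n\mid Z_0=z}=0$, so on $\{\tau_0(B)=\infty\}$ the chain stays in $S\setminus C$ almost surely; hence $\alpha(Z_i)<1$ for every $i$ on this event.  Because $S$ is finite, the set $\{\alpha(y):y\in S,\,\alpha(y)<1\}$ is finite, so $\alpha^\ast:=\max\{\alpha(y):\alpha(y)<1\}<1$, and on $\{\tau_0(B)=\infty\}$ we get $\prod_{i=0}^{n-1}\alpha(Z_i)\leq(\alpha^\ast)^n\to 0$.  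Combined with $\mathbf 1_{\{\tau_0(B)>n\}}\to\mathbf 1_{\{\tau_0(B)=\infty\}}$, bounded convergence yields $R_n(z)\to 0$, hence $h_z=0$ for every $z\notin B$.  This finishes the uniqueness proof, and the corollary follows.
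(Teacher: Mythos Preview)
Your existence argument is essentially identical to the paper's. For uniqueness, however, you take a genuinely different route. The paper argues via a one-step maximum principle: from $h(z)=\sum_{y\notin B}\Psi_{z,y}h(y)$ it bounds $|h(z)|\leq M\sum_{y\notin B}\Psi_{z,y}$ with $M:=\max_{x\notin B}|h(x)|$, picks $z$ attaining the maximum, and concludes $M=0$ once it has been asserted that $\sum_{y\notin B}\Psi_{z,y}<1$ for every $z\notin B$. You instead iterate the relation to obtain the probabilistic representation $h(z)=E\big[\prod_{i=0}^{n-1}\alpha(Z_i)\,\mathbf 1_{\{\tau_0(B)>n\}}h(Z_n)\,\big|\,Z_0=z\big]$ and then drive the tail to zero using condition~\eqref{bedmc} together with finiteness of $S$.

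Your approach is not only different but in fact more robust: the paper's key inequality $\alpha(z)\sum_{y\notin B}\Pi_{z,y}<1$ does \emph{not} follow from~\eqref{bedmc} in general. Take $S=\{1,2,3\}$, $B=\{3\}$, deterministic transitions $1\to 2\to 3\to 3$ and $\alpha\equiv 1$; then \eqref{bedmc} holds (every state reaches $B$ a.s.), yet for $z=1$ one gets $\alpha(1)\sum_{y\notin B}\Pi_{1,y}=\Pi_{1,2}=1$. The one-step contraction argument therefore breaks down exactly when $\alpha(z)=1$ and the chain needs more than one step to reach $B$. Your iteration handles this cleanly: on $\{\tau_0(B)=\infty\}$ the chain is trapped in $S\setminus C$, where $\alpha<1$ uniformly by finiteness, so the product decays geometrically; on $\{\tau_0(B)<\infty\}$ the indicator vanishes eventually. (A minor remark: when $C=S$ the set defining $\alpha^\ast$ may be empty, but then $\{\tau_0(B)=\infty\}$ is null and the conclusion is immediate; you might add a word to cover this edge case.)
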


\begin{proof}
	For all $h\in\R^S$ we have by definition
	\begin{eqnarray*}
		d=Ah \Longleftrightarrow \forall z\in B\;\;h(z)=g(z)\wedge\forall z\not\in B\;\; h(z)=\alpha(z) \sum_{y\in S}\bedWkt{P}{Z_1=y}{Z_0=z}h(y).
	\end{eqnarray*}
%	Now we prove that
%	\(
%	\sett{\vphantom{h'_{B,0}}h\in\R^S}{d=Ah}=\set{h'_{B,0}}
%	.\)
%	\textbff{proof of $\set{h'_{B,0}}\subseteq\sett{\vphantom{h'_{B,0}}h\in\R^S}{d=Ah}$}:
	For all $z\in B$ we have $Z_0=z\in B$ and thus 
	$\tau_0(B)=0$ and 
	$\prod\limits_{j=0}^{\tau_0(B)-1}\alpha(Z_i)=1$,  hence 
	\begin{eqnarray*}
		h'_{B,0}(z)
		=\CondExp{g\Klammern{Z_{\tau_0(B)}}\prod_{j=0}^{\tau_0(B)-1}\alpha(Z_i)}{Z_0=z}
		=\CondExp{g\Klammern{Z_{0}}\cdot 1}{Z_0=z}
		=g\Klammern{z}
	\end{eqnarray*}
	and for all $z\not\in B$ we have using the Markov property
	\begin{eqnarray*}
		h'_{B,0}(z)
		&=&
%		\CondExp{g\Klammern{Z_{\tau_0(B)}}\prod_{j=0}^{\tau_0(B)-1}\alpha(Z_i)}{Z_0=z}
%		\\	&=&
%		\alpha(z)\CondExp{g\Klammern{Z_{\tau_0(B)}}\prod_{j=1}^{\tau_0(B)-1}\alpha(Z_i)}{Z_0=z}
		%	&=&\sum_{y\in S}\bedWkt{P}{Z_1=y}{Z_0=z}\CondExp{\alpha^{\tau_1(B)}g\Klammern{Z_{\tau_1(B)}}}{Z_1=y,Z_0=z}
%		\\	&=&
%		\alpha(z)\sum_{y\in S}\bedWkt{P}{Z_1=y}{Z_0=z}
%		\CondExp{g\Klammern{Z_{\tau_1(B)}}\prod_{j=1}^{\tau_1(B)-1}\alpha(Z_i)}{Z_1=y}
%		\\	&\omu{}{=}{}&
		\alpha(z)\sum_{y\in S}\bedWkt{P}{Z_1=y}{Z_0=z}
		\CondExp{g\Klammern{Z_{\tau_0(B)}}\prod_{j=0}^{\tau_0(B)-1}\alpha(Z_i)}{Z_0=y}
		\\	&=&
		\alpha(z)\sum_{y\in S}\bedWkt{P}{Z_1=y}{Z_0=z}h'(y)
		.\end{eqnarray*}
	So we have $d=Ah'_{B,0}$.
%	\textbff{proof of $\betrag{\sett{h\in\R^S}{d=Ah}}=1$}:
%	\\

We now prove uniqueness by doing so for the corresponding homogeneous system of linear equations: 
%	We look %statt "Let's have a look" Irle 04/2010 
%	at the corresponding homogeneous system of linear equations. 
	Hence consider some $h\in\R^S$ with $0=Ah$.
	It is obvious that for all $y\in B$ we have $h(y)=0$.
	For all $z\not\in B$ 
	\begin{eqnarray*}
		h(z)
		&=&
		\alpha(z) \sum_{y\in S}\bedWkt{P}{Z_1=y}{Z_0=z}h(y)
		\\	&=&
		\sum_{y\not\in B}\alpha(z)\bedWkt{P}{Z_1=y}{Z_0=z}h(y)
		\\	&=&
		\sum_{y\not\in B}\Psi_{z,y}h(y) 
		.\end{eqnarray*}
	Moreover,% for all $z\not\in B$ 
	\begin{eqnarray*}
		\betrag{h(z)}
		=
		\betrag{\sum_{y\not\in B}\Psi_{z,y}h(y)}
		&\leq&
		\sum_{y\not\in B}\Psi_{z,y}\betrag{h(y)}
%		\\	&\leq&
%		\sum_{y\not\in B}\Psi_{z,y}\max\sett{\betrag{h(x)}}{x\not\in B}
		\\	&\leq&
		\max\sett{\betrag{h(x)}}{x\not\in B}\sum_{y\not\in B}\Psi_{z,y}
		.\end{eqnarray*}
	Hence 
	\[
	\max\sett{\betrag{h(x)}}{x\not\in B}
	\leq
	\max\sett{\betrag{h(x)}}{x\not\in B}\sum_{y\not\in B}\Psi_{z,y}
	.\]
	By \eqref{bedmc} we on the other hand have 
	%	\[
	%	\Klammern{\Klammern{\vphantom{\sum_B}\alpha(z)<1}\text{ or }\Klammern{ z\not\in B\;\;\Rightarrow \sum_{y\not\in B}\Pi_{z,y}<1}}
	%	\text{for all $z\in S$} 
	%	,\]
	%	hence
	\begin{eqnarray}\label{istkleinereins}
	\sum_{y\not\in B}\Psi_{z,y}=\alpha(z)\sum_{y\not\in B}\Pi_{z,y}<1
	%	\;\;\;
	\text{ for all } z\not\in B
	.\end{eqnarray}
	%	Due to \eqref{istkleinereins} w
	We obtain \[0=\max\sett{\betrag{h(x)}}{x\not\in B}\sum_{y\not\in B}\Psi_{z,y}.\]
	Hence $h\equiv0$, proving the claim.
\end{proof}

\section{A Numerical Experiment}\label{sec:numerics}
To start with a toy example, consider $\set{0,...,20}\times\set{0,...,20}$ as the state space for the chain and a 
payoff function $g$ with $g(z,n)=\alpha^n f(z)$ with $f((5,5))=10$, $f((5,15))=f((15,15))=0$, $f((x,y))=5$ otherwise, see Figure \ref{fig:payoff}. 
The transition probabilities are given in the form
$$ p(x+1,y\mid x,y) = 0.5 \cdot p_x, \;  p(x-1,y\mid x,y) = 0.5 \cdot ( 1-p_x),$$
$$ p(x,y+1\mid x,y) = 0.5 \cdot p_y, \;  p(x,y-1\mid x,y) = 0.5 \cdot ( 1-p_y),$$
with reflecting boundaries. 
%Figure shows the (undiscounted) pay-offs.
\begin{figure}
		\includegraphics{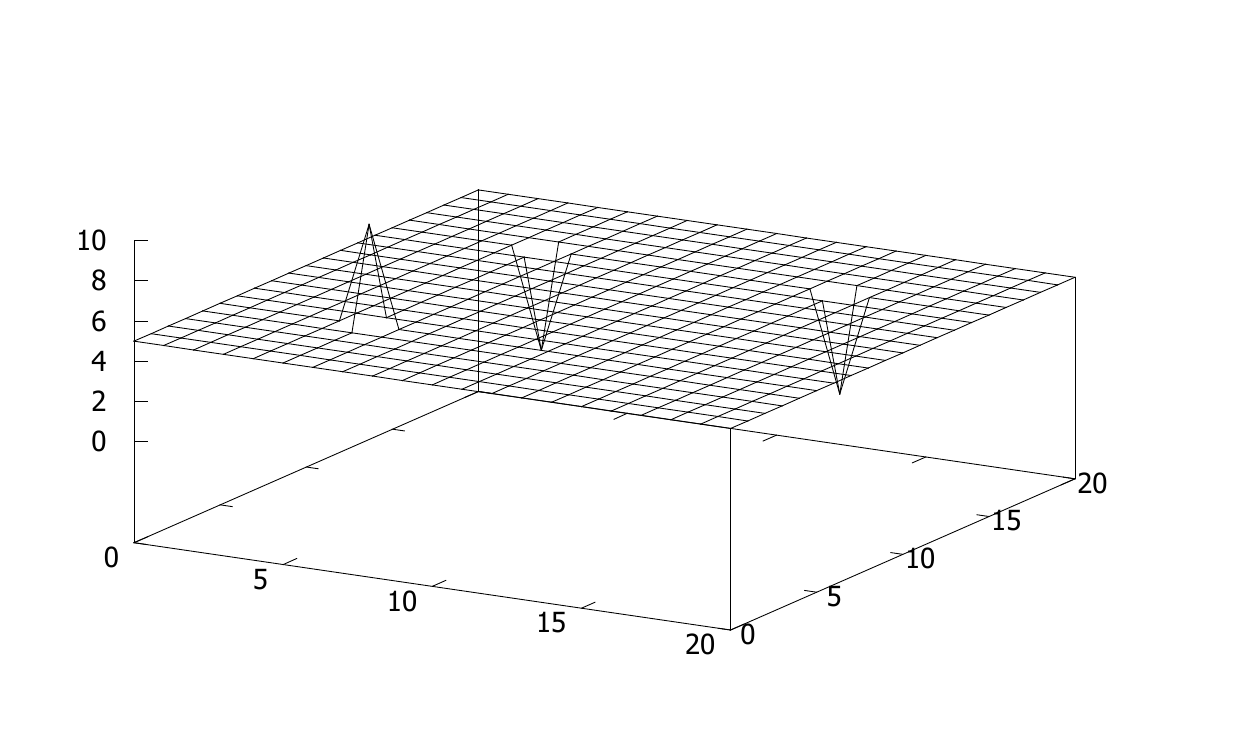}
		\caption{Undiscounted Pay-Offs in the toy example}\label{fig:payoff}
\end{figure}

With a discount-factor $\alpha=0.98^{1/20}$ our program recommends a stopping rule as given in Figure \ref{fig:stopping_set}
%. Using this stopping rule our expected values are the 
with corresponding optimal value shown in\ Figure \ref{fig:;value}.
%The stopping points are .
\begin{figure}
	\begin{center}
		\includegraphics{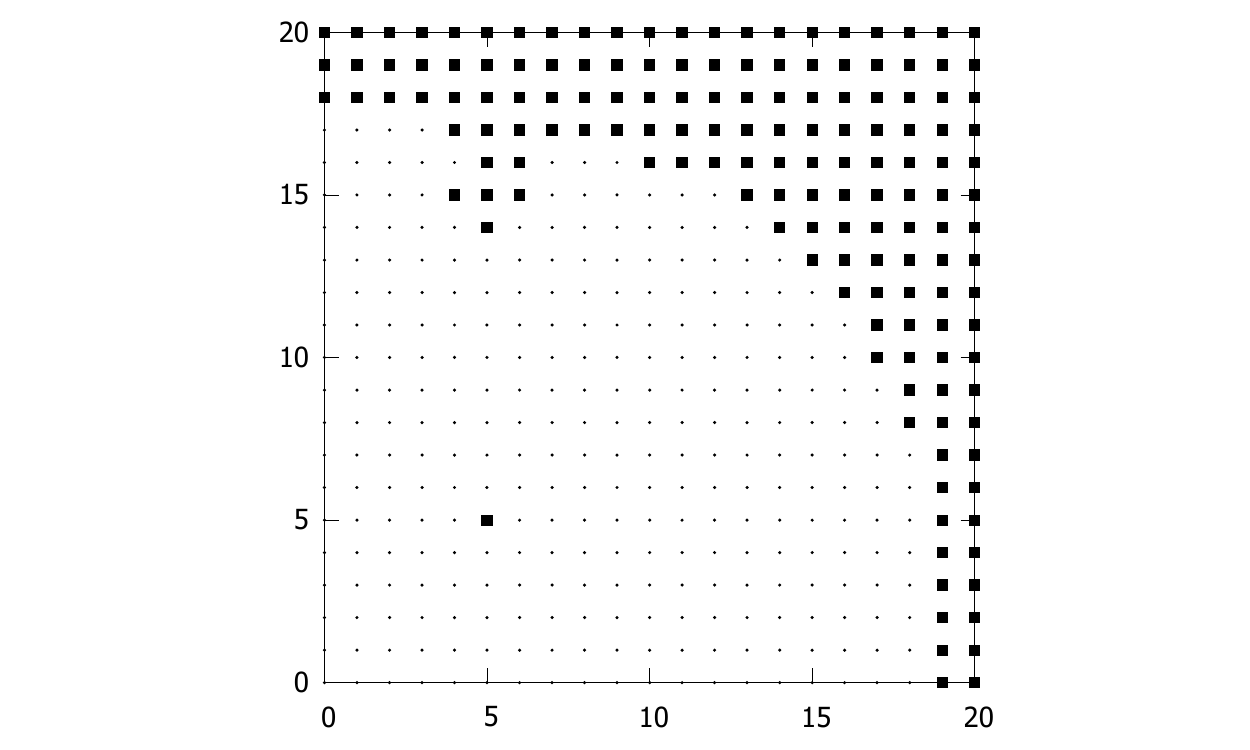}%
		\caption{Stopping-Points (big squares) of the optimal stopping rule in the toy example}\label{fig:stopping_set}
	\end{center}
\end{figure}
\begin{figure}
	\begin{center}
		\includegraphics{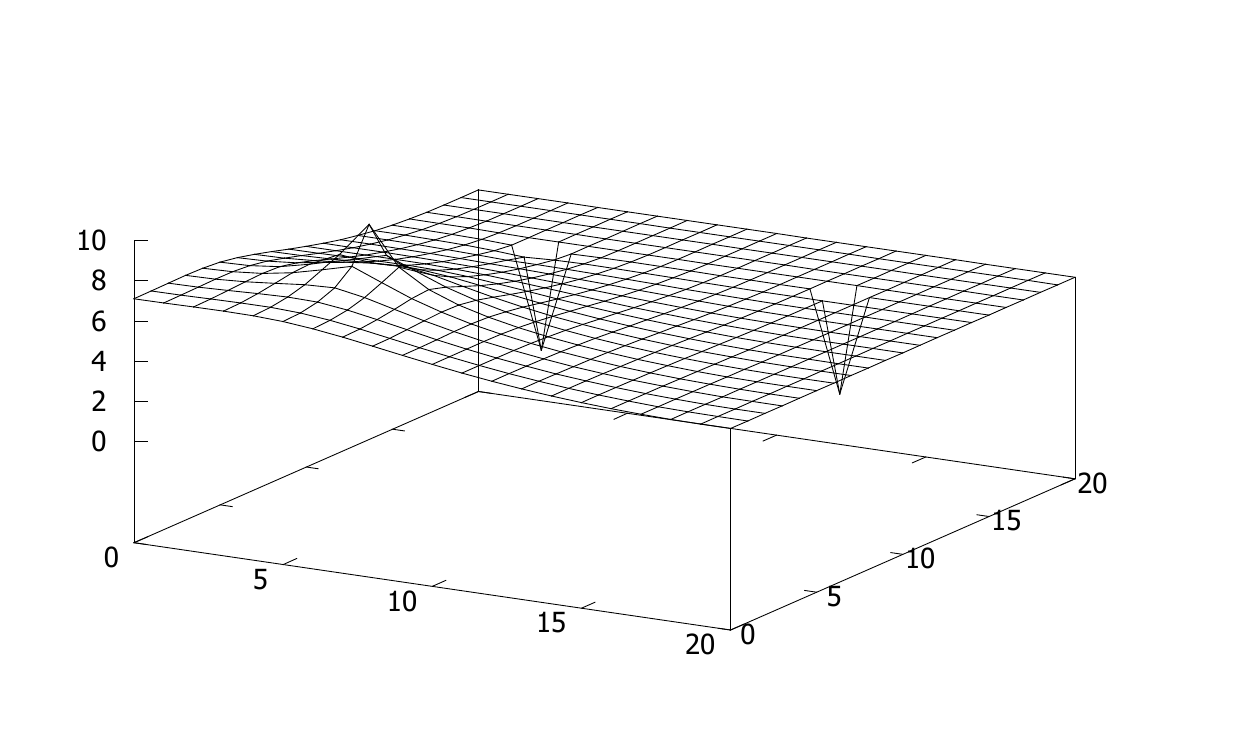}%
		\caption{Expected Values in the toy example}\label{fig:;value}
	\end{center}
\end{figure}
%The number of iterations and the time for the calculation depends on $\kappa$, but this did not influence the result.
%Nevertheless in other examples we found out that due to machine precision the powers of the transition matrix could not be calculated  exactly enough, so even the results changed to too small or too big stopping areas. 
For constant $\kappa$ equivalent to $\set 1$ (the traditional forward improvement iteration algorithm) the program is solving the problem above within 19 iterations. The speed of solving the toy example above is so high that the effect of changing $\kappa$ cannot be examined therein. 

Thus we will increase the resolution by factor 10 in each dimension to study the effect of $\kappa$, hence we study a state space of $\set{0,...,200}\times\set{0,...,200}$ with pay-off $10$ at $(50,50)$, pay-off $0$ at $(50,150)$ and $(150,150)$ and 5 otherwise. We set $\alpha=0.9999$, leading to an optimal stopping rule of stopping at the four surrounding points of $(50,150)$ and $(150,150)$ as well as at $(50,50)$ and continuing at all other points.

%Using the definitions of \TextRef{Setting}{71} we will examine more arbitrary $\function{\kappa}{\N}{\Pot(\N)}$.
%\Leerzeile

At first we will examine $\kappa\equiv k$ for different $k$. We have to take into account two effects: With increasing $k$, more powers of the transition matrix have to be calculated while less iterations are needed.
%, see Figure \ref{fig:powers}, while less iterations are needed, see Figure \ref{fig:iterations}. 
In this case, a choice of $k=5$  seems to be a good balance, see Figure \ref{fig:runtime}.

%The largest value of kappa we could examine was 28. At larger values the amount of space needed for storing the powers of the transition matrix explodes and exceeds the RAM-size of 1 GB. The algorithm slows significantly down due to hard disc accesses for reading the matrix powers.

%We have a look at the time needed to calculate the powers of the transition matrix. 
%%There is a logarithmic dependency on kappa.
%
%\begin{figure}
%	\includegraphics{Dissertation-LaufzeitQMatrices.pdf}%
%	\caption{Runtime (in log-scale) for calculating powers of the transition matrix plotted against $\kappa\equiv k$.}\label{fig:powers}
%\end{figure}
%
%\begin{figure}
%	\includegraphics{Dissertation-Iterationen.pdf}%
%	\caption{Number of iterations plotted against $\kappa\equiv k$.}\label{fig:iterations}
%\end{figure}

\begin{figure}
	\includegraphics{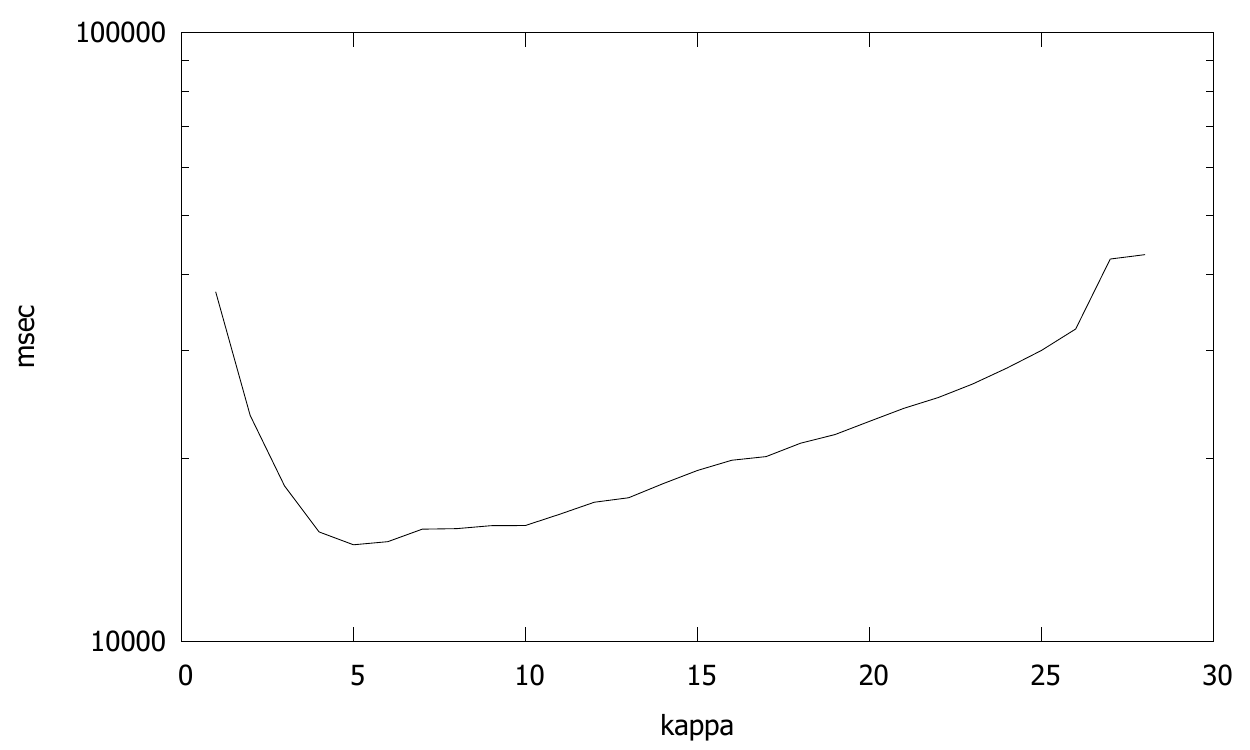}%
	\caption{Runtime of the algorithm (in log-scale) plotted against $\kappa\equiv k$.}\label{fig:runtime}
\end{figure}
%The number of iterations increases with kappa decreasing.
%
%\begin{figure}
%	\includegraphics{Dissertation-Zushg-Laufzeit-LGS-Loesen-Punkte.pdf}%
%\end{figure}
%In each iteration we have to solve a linear equation independent of $\kappa$.

%We have also done some tests with more arbitrary $\function{\kappa}{\N}{\Pot(\N)}$. These experiments suggest that the time needed for doing the $n$:th iteration does mostly depend on the size of $\kappa(n)$, the set itself does not really influence the runtime.

%Consider the algorithm is doing $n$ iterations. If $1\not\in\kappa(n)$ then in our tests the resulting stopping area was often larger then the optimal stopping area, since going one more step was better, but going more than one step was not, but this improvement could not be found by the algorithm, since $1\not\in\kappa(n)$. But given $1\in\kappa(n)$ (and sufficiently high machine precision) in our tests the final result was always the optimal stopping area.

%\end{center}
\appendix
\section{Some Lemmata}
\begin{lemma}\label{PartialImproving}
	Assume $B\subseteq S$, $D\subseteq\N$, \fabist{0}{n}{<}{\infty}{}, $j\in D$, 
	$A\in\mathcal{A}_n$ with $A\subseteq\set{Z_n\in B^{*D_{<j}}\setminus B^{*D_{\leq j}}}$, $z\in S$.
	\\Then we have
	\[
	\integrala{A}{\XX_n}\leq\integrala{A}{\XX_{\tau_{n+j}(B)}}
	.\]
\end{lemma}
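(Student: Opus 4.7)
The plan is to apply the Markov property at time $n$ to rewrite both sides, reducing the claimed inequality to a pointwise comparison at $Z_n$, which can then be read off directly from the set-theoretic hypothesis on $A$.

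First, I would factor the discount up to time $n$ out of both sides. The definition gives $\XX_n = \big(\prod_{i=0}^{n-1}\alpha(Z_i)\big)g(Z_n)$. For the right-hand side, since $\tau_{n+j}(B) = n + \inf\{k\geq j\colon Z_{n+k}\in B\}$ and $(Z_{n+m})_{m\geq0}$ is, conditionally on $\mathcal{A}_n$, a Markov chain starting at $Z_n$ with the same dynamics, the Markov property yields
\[
\CondExp{\XX_{\tau_{n+j}(B)}}{\mathcal{A}_n}
=\Big(\prod_{i=0}^{n-1}\alpha(Z_i)\Big)\,h(Z_n), \qquad h(z):=\CondExp{\XX_{\tau_j(B)}}{Z_0=z}.
\]
Since $A\in\mathcal{A}_n$ and the prefix discount factor is non-negative, it will then suffice to verify $g(Z_n)\leq h(Z_n)$ pointwise on $A$.

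Next, I would unpack the set-theoretic hypothesis on $A$. Membership in $B^{*D_{<j}}$ means $Z_n\in B$ together with $g(Z_n)\geq\CondExp{\XX_{\tau_l(B)}}{Z_0=Z_n}$ for every $l\in D$ with $l<j$; the failure of membership in $B^{*D_{\leq j}}$ says that at least one such inequality, for some $l\in D$ with $l\leq j$, is violated. The only new index in $D_{\leq j}$ relative to $D_{<j}$ is $l=j$ itself (which lies in $D$ by hypothesis), so the failure must occur there, yielding $g(Z_n)<h(Z_n)$ on $A$. Integrating over $A$ against the non-negative prefix $\prod_{i=0}^{n-1}\alpha(Z_i)$ therefore gives
\[
\integrala{A}{\XX_n}
=\integrala{A}{\Big(\prod_{i=0}^{n-1}\alpha(Z_i)\Big)g(Z_n)}
\leq \integrala{A}{\Big(\prod_{i=0}^{n-1}\alpha(Z_i)\Big)h(Z_n)}
=\integrala{A}{\XX_{\tau_{n+j}(B)}},
\]
which is the claim.

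The only mildly delicate point is the Markov-property identity when $\tau_{n+j}(B)=\infty$ with positive probability; this is handled by the paper's convention $\XX_\infty=\big(\prod_i\alpha(Z_i)\big)g(Z_\infty)$ together with the standing assumption that $\CondExp{\XX_\tau}{Z_0=z}$ exists for every stopping rule $\tau$ and every starting state $z$. Everything else is routine bookkeeping.
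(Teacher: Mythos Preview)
Your proposal is correct and follows essentially the same approach as the paper: both arguments factor out the discount $\prod_{i=0}^{n-1}\alpha(Z_i)$, invoke the Markov property at time $n$ to reduce to a pointwise comparison $g(Z_n)<\CondExp{\XX_{\tau_j(B)}}{Z_0=Z_n}$, and read that inequality off from the set-theoretic identity $B^{*D_{<j}}\setminus B^{*D_{\leq j}}=\{z\in B^{*D_{<j}}:g(z)<\CondExp{\XX_{\tau_j(B)}}{Z_0=z}\}$. The only cosmetic difference is that the paper phrases the Markov step via an independent copy $(Z_m')_{m\in\NN}$ of the chain, whereas you state the conditional-expectation identity directly.
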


\begin{proof}
	For all $E\subseteq D$, $j\in E$  we have
	\begin{eqnarray*}
	B^{*E}
	&\subseteq&\nonumber
	B^{*E\setminus\set j},
	\\	B^{*E}
	&=&\nonumber
	\sett{z\in B^{*E\setminus\set j}}{g(z)\geq \CondExp{\XX_{\tau_{j}(B)}}{Z_0=z}},
	\\	B^{*E\setminus\set j}\setminus B^{*E}
	&=&%\labell{eq}{BohneB}
	\sett{z\in B^{*E\setminus\set j}}{g(z)< \CondExp{\XX_{\tau_{j}(B)}}{Z_0=z}}
	.\end{eqnarray*}
	
	Assume $(Z_m')_{m\in\NN}$ being an independent copy of $(Z_m)_{m\in\NN}$ and 
	\[\text{
		$\tau_p'(B)=\inf\sett{q\geq p}{Z_q'\in B}$ \fabist{0}{p}{<}{\infty}{for all}.
	}\]
	Then we have
	\begin{eqnarray*}
		&&		\integrala{A}{\XX_n}
%		\\	&\omu{def. of $X_n$}{=}{}&
\\&=&
		\integrala{A}{\Klammern{\prod_{i=0}^{n-1}\alpha(Z_i)}g(Z_n)}
%		\\	&\omu{$Z_n\in B^{*D_{<j}}\setminus B^{*D_{\leq j}}$}{\leq}{\eqref{eq:BohneB}}&
\\&\leq&
		\integrala{A}{\Klammern{\prod_{i=0}^{n-1}\alpha(Z_i)}
			\CondExp{g\Klammern{Z'_{\tau'_j(B)}}\prod_{i=0}^{\tau'_{j}(B)-1}\alpha(Z'_i)}{Z'_0=Z_n}}
%		\\	&\omu{M.C.}{=}{}&
\\&=&
		\integrala{A}{\Klammern{\prod_{i=0}^{n-1}\alpha(Z_i)}
			\CondExp{g\Klammern{Z_{\tau_{n+j}(B)}}\prod_{i=n}^{\tau_{n+j}(B)-1}\alpha(Z_i)}{\mathcal{A}_n}}
%		\\	&\omu{$A\in\mathcal{A}_n$}{=}{}&
\\&=&
		\integrala{A}{\Klammern{\prod_{i=0}^{n-1}\alpha(Z_i)}
			g\Klammern{Z_{\tau_{n+j}(B)}}\prod_{i=n}^{\tau_{n+j}(B)-1}\alpha(Z_i)}
		\\	&=&
		\integrala{A}{\Klammern{\prod_{i=0}^{\tau_{n+j}(B)-1}\alpha(Z_i)}g\Klammern{Z_{\tau_{n+j}(B)}}}
%		\\	&\omu{def.}{=}{}&
\\&=&
		\integrala{A}{\XX_{\tau_{n+j}(B)}}
		.\end{eqnarray*}
\end{proof}

%=====================================================
%\pagebreak
%\subsectionM
%{Lemma: 
\begin{lemma}
	\label{PartialImprovingII}
	%=====================================================
	Assume $B\subseteq S$, $D\subseteq\N$, $s,t\in\N$ with $s\leq t$ and $t-s\in D$, $A\in\mathcal{A}_s$ with $A\subseteq\set{Z_s\in B^{*D}}$. 
	Then we have
	\begin{eqnarray*}
	\integrala{A}{\XX_{\tau_t(B)}}
	\leq%\labell{eq}{necess}
	\integrala{A}{\XX_s}
	.\end{eqnarray*}
\end{lemma}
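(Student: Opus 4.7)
The plan is to condition at time $s$ and exploit the Markov property together with the defining property of $B^{*D}$. Writing
\[
X_{\tau_t(B)} = \Bigl(\prod_{i=0}^{s-1}\alpha(Z_i)\Bigr)\cdot\Bigl(\prod_{i=s}^{\tau_t(B)-1}\alpha(Z_i)\Bigr)g(Z_{\tau_t(B)}),
\]
the first factor is $\mathcal{A}_s$-measurable, so it can be pulled outside any conditional expectation with respect to $\mathcal{A}_s$. The random variables $\tau_t(B)-s$ and $(Z_{s+j})_{j\in\NN}$, shifted to start at time $s$, have the same joint law under $P(\,\cdot\mid Z_0=z)$ as $\tau_{t-s}(B)$ and $(Z_j)_{j\in\NN}$ under $P(\,\cdot\mid Z_0=Z_s)$, by the strong Markov property. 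This gives
\[
\CondExp{\Bigl(\prod_{i=s}^{\tau_t(B)-1}\alpha(Z_i)\Bigr)g(Z_{\tau_t(B)})}{\mathcal{A}_s}
=\CondExp{X_{\tau_{t-s}(B)}}{Z_0=Z_s}.
\]

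Because $t-s\in D$ and $A\subseteq\{Z_s\in B^{*D}\}$, the defining inequality of $B^{*D}$ applied with $k=t-s$ yields
\[
\CondExp{X_{\tau_{t-s}(B)}}{Z_0=Z_s}\leq g(Z_s) \quad\text{on } A.
\]
Combining these two observations, the tower property and $A\in\mathcal{A}_s$ give
\begin{align*}
\integrala{A}{X_{\tau_t(B)}}
&=\integrala{A}{\Bigl(\prod_{i=0}^{s-1}\alpha(Z_i)\Bigr)\CondExp{X_{\tau_{t-s}(B)}}{Z_0=Z_s}}\\
&\leq\integrala{A}{\Bigl(\prod_{i=0}^{s-1}\alpha(Z_i)\Bigr)g(Z_s)}
=\integrala{A}{X_s},
\end{align*}
which is the desired inequality.

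The only real issue is bookkeeping: one has to be careful that the integration is with respect to $P_z=P(\,\cdot\mid Z_0=z)$ while the inner conditional expectation uses $Z_0=Z_s$, and that the Markov property is applied in the right form (starting the chain anew at time $s$). I would either formalize this via an independent copy $(Z_m')_{m\in\NN}$ as already done in Lemma \ref{PartialImproving}, rewriting the inner factor as $E[g(Z'_{\tau'_{t-s}(B)})\prod_{i=0}^{\tau'_{t-s}(B)-1}\alpha(Z'_i)\mid Z'_0=Z_s]$, or invoke the strong Markov property directly; either route is routine.
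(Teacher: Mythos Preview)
Your proof is correct and follows essentially the same route as the paper's: split the product at time $s$, condition on $\mathcal{A}_s$, invoke the Markov property (the paper does this via the independent copy $(Z_m')_{m\in\NN}$ you mention), and then use $t-s\in D$ together with $Z_s\in B^{*D}$ to bound the inner conditional expectation by $g(Z_s)$. The only cosmetic difference is that the paper writes out the chain of equalities with the explicit $\alpha$-products rather than condensing them into $X_{\tau_{t-s}(B)}$.
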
	

\begin{proof}
	Assume $(Z_n')_{n\in\NN}$ being an independent copy of $(Z_n)_{n\in\NN}$ and 
	\[\text{$\tau_p'(B)=\inf\sett{q\geq p}{Z_q'\in B}$ \fabist{0}{p}{<}{\infty}{for all}.}\]
	Then we have
	\begin{eqnarray*}
		&&
		\integrala{A}{\XX_{\tau_t(B)}}
%		\\	&\omu{def.}{=}{}&
\\&=&
		\integrala{A}{\Klammern{\prod_{i=0}^{\tau_t(B)-1}\alpha(Z_i)}g\Klammern{Z_{\tau_t(B)}}}
		\\	&=&
		\integrala{A}{\Klammern{\prod_{i=0}^{s-1}\alpha(Z_i)}
			g\Klammern{Z_{\tau_t(B)}}\prod_{i=s}^{\tau_t(B)-1}\alpha(Z_i)}
%		\\	&\hspace{-2mm}\omu{$A\in\mathcal{A}_s$}{=}{}&
\\&=&
		\integrala{A}{\Klammern{\prod_{i=0}^{s-1}\alpha(Z_i)}
			\CondExp{g\Klammern{Z_{\tau_t(B)}}\prod_{i=s}^{\tau_t(B)-1}\alpha(Z_i)}{\mathcal{A}_s}}
%		\\	&\omu{M.C.}{=}{}&
\\&=&
		\integrala{A}{\Klammern{\prod_{i=0}^{s-1}\alpha(Z_i)}
			\CondExp{g\Klammern{Z'_{\tau'_{t-s}(B)}}\prod_{i=0}^{\tau'_{t-s}(B)-1}\alpha(Z'_i)}{Z'_0=Z_s}}
		\\	&\omu{$Z_s\in B^{*D}$}{\leq}{$t-s\in D$}&
		\integrala{A}{\Klammern{\prod_{i=0}^{s-1}\alpha(Z_i)}g(Z_s)}
%		\\	&\omu{def. of $X_s$}{=}{}&
\\&=&
		\integrala{A}{\XX_s}
		.\end{eqnarray*}
\end{proof}

\bibliographystyle{plainnat}
\bibliography{FII}

\begin{thebibliography}{20}
\providecommand{\natexlab}[1]{#1}
\providecommand{\url}[1]{\texttt{#1}}
\expandafter\ifx\csname urlstyle\endcsname\relax
  \providecommand{\doi}[1]{doi: #1}\else
  \providecommand{\doi}{doi: \begingroup \urlstyle{rm}\Url}\fi

\bibitem[Asmussen(2003)]{Asm03}
S{\o}ren Asmussen.
\newblock \emph{Applied Probability and Queues}.
\newblock Applications of mathematics 51. Springer, 2003.
\newblock ISBN 0387002111.

\bibitem[Bender and Schoenmakers(2006)]{BS06}
Christian Bender and John Schoenmakers.
\newblock An iterative method for multiple stopping: convergence and stability.
\newblock \emph{Advances in Applied Probability}, 38\penalty0 (3):\penalty0
  729--749, 2006.
\newblock ISSN 0001-8678.

\bibitem[Cho and Stockbridge(2002)]{cho2002linear}
Moon~Jung Cho and Richard~H Stockbridge.
\newblock Linear programming formulation for optimal stopping problems.
\newblock \emph{SIAM Journal on Control and Optimization}, 40\penalty0
  (6):\penalty0 1965--1982, 2002.

\bibitem[Chow et~al.(1971)Chow, Robbins, and Siegmund]{CRS1971}
Yuan~Shih Chow, Herbert Robbins, and David Siegmund.
\newblock \emph{Great expectations: the theory of optimal stopping}.
\newblock Houghton Mifflin Company Boston, 1971.
\newblock ISBN 0395053145, 9780395053140.

\bibitem[Christensen(2014)]{christensen2014method}
S{\"o}ren Christensen.
\newblock A {M}ethod for {P}ricing {A}merican {O}ptions {U}sing
  {S}emi-{I}nfinite {L}inear {P}rogramming.
\newblock \emph{Mathematical Finance}, 24\penalty0 (1):\penalty0 156--172,
  2014.

\bibitem[Christensen and Irle(2020)]{CI19}
S{\"o}ren Christensen and Albrecht Irle.
\newblock The monotone case approach for the solution of certain
  multidimensional optimal stopping problems.
\newblock \emph{Stochastic Processes and their Applications}, 130\penalty0
  (4):\penalty0 1972--1993, 2020.

\bibitem[Christensen and Lindensj{\"o}(2018)]{christensen2018finding}
S{\"o}ren Christensen and Kristoffer Lindensj{\"o}.
\newblock On finding equilibrium stopping times for time-inconsistent markovian
  problems.
\newblock \emph{SIAM Journal on Control and Optimization}, 56\penalty0
  (6):\penalty0 4228--4255, 2018.

\bibitem[Christensen and Sohr(2020)]{christensen2020solution}
S{\"o}ren Christensen and Tobias Sohr.
\newblock A solution technique for l{\'e}vy driven long term average impulse
  control problems.
\newblock \emph{Stochastic Processes and their Applications}, 130\penalty0
  (12):\penalty0 7303--7337, 2020.

\bibitem[Ferguson(2008)]{Ferguson}
Thomas~S. Ferguson.
\newblock Optimal stopping and applications.
\newblock {http://www.math.ucla.edu/~tom/Stopping/Contents.html}, 2008.

\bibitem[Glasserman(2003)]{Gla03}
Paul Glasserman.
\newblock \emph{Monte Carlo Methods in Financial Engineering (Stochastic
  Modelling and Applied Probability)}.
\newblock Springer, 2003.
\newblock ISBN 0387004513.

\bibitem[Howard(1960)]{howard:dynamic1960}
Ronald~A. Howard.
\newblock \emph{Dynamic Programming and {M}arkov Processes}.
\newblock MIT Press, Cambridge, MA, 1960.

\bibitem[Irle(1980)]{Irle1980}
Albrecht Irle.
\newblock On the best choice problem with random population size.
\newblock \emph{Zeitschrift f\"ur Operations Research. Serie A. Serie B},
  24\penalty0 (5):\penalty0 177--190, 1980.
\newblock ISSN 0340-9422.

\bibitem[Irle(2006)]{Irl06}
Albrecht Irle.
\newblock A forward algorithm for solving optimal stopping problems.
\newblock \emph{Journal of Applied Probability}, 43\penalty0 (1):\penalty0
  102--113, 2006.
\newblock ISSN 0021-9002.

\bibitem[Irle(2009)]{Irl09}
Albrecht Irle.
\newblock On forward improvement iteration for stopping problems.
\newblock In \emph{Proceeding of the International Workshop of Sequential
  Methodologies}, Troyes, 2009.

\bibitem[Kolodko and Schoenmakers(2006)]{KS06}
Anastasia Kolodko and John Schoenmakers.
\newblock Iterative construction of the optimal {B}ermudan stopping time.
\newblock \emph{Finance and Stochastics}, 10\penalty0 (1):\penalty0 27--49,
  2006.
\newblock ISSN 0949-2984.

\bibitem[Miclo and Villeneuve(2021)]{miclo_villeneuve_2021}
Laurent Miclo and St{\'e}phane Villeneuve.
\newblock On the forward algorithm for stopping problems on continuous-time
  markov chains.
\newblock \emph{Journal of Applied Probability}, 58\penalty0 (4):\penalty0
  1043--1063, 2021.
\newblock \doi{10.1017/jpr.2021.11}.

\bibitem[Presman(2011)]{presman2011new}
Ernst Presman.
\newblock A new approach to the solution of optimal stopping problem in a
  discrete time.
\newblock \emph{Stochastics An International Journal of Probability and
  Stochastic Processes}, 83\penalty0 (4-6):\penalty0 467--475, 2011.

\bibitem[Shiryaev(2008)]{MR2374974}
Albert~N. Shiryaev.
\newblock \emph{Optimal stopping rules}, volume~8 of \emph{Stochastic Modelling
  and Applied Probability}.
\newblock Springer-Verlag, Berlin, 2008.
\newblock ISBN 978-3-540-74010-0.
\newblock URL \url{https://mathscinet.ams.org/mathscinet-getitem?mr=2374974}.
\newblock Translated from the 1976 Russian second edition by A. B. Aries,
  Reprint of the 1978 translation.

\bibitem[Sonin(1999)]{sonin1999elimination}
Isaac Sonin.
\newblock The elimination algorithm for the problem of optimal stopping.
\newblock \emph{Mathematical methods of operations research}, 49\penalty0
  (1):\penalty0 111--123, 1999.

\bibitem[Tsitsiklis and Van~Roy(1999)]{tsitsiklis1999optimal}
John~N Tsitsiklis and Benjamin Van~Roy.
\newblock Optimal stopping of markov processes: Hilbert space theory,
  approximation algorithms, and an application to pricing high-dimensional
  financial derivatives.
\newblock \emph{IEEE Transactions on Automatic Control}, 44\penalty0
  (10):\penalty0 1840--1851, 1999.

\end{thebibliography}
%\listofchanges

\end{document}